\newtheorem{mtheorem}{Theorem}  
\newtheorem{mcorollary}[mtheorem]{Corollary}
\newtheorem{theorem}{Theorem}[section]
\newtheorem{lemma}[theorem]{Lemma}
\newtheorem{prop}[theorem]{Proposition}
\theoremstyle{definition}
\newtheorem{definition}[theorem]{Definition}
\newtheorem*{ack}{Acknowledgements}
\newtheorem*{ex}{Example}
\newtheorem{rmk}[theorem]{Remark}
\newcommand{\FF}{\mathcal{F}}
\newcommand{\bdy}{\partial}
\newcommand{\red}[1]{\textcolor{red}{#1}}
\newcommand{\blue}[1]{\textcolor{blue}{#1}}
\newcommand{\magenta}[1]{\textcolor{magenta}{#1}}
\newcommand{\pre}[1]{\prescript{}{#1}}
\newcommand{\G}[1]{\prescript{}{#1}G}
\newcommand{\hol}{\Gamma}
\newcommand{\wtilde}{\widetilde}
\newcommand{\Sp}{\mathbb{S}} 
\DeclareMathOperator{\Iso}{Isom}
\DeclareMathOperator{\codim}{codim}
\DeclareMathOperator{\reg}{reg}
\DeclareMathOperator{\expo}{exp}
\DeclareMathOperator{\cl}{Cl}
\begin{document}
\title{Core Reduction for Singular Riemannian Foliations in Positive Curvature}

\author{Diego Corro$^{*\dagger}$}
\address{Instituto de Matem\'{a}ticas, Unidad Oaxaca\\
Universidad Nacional Aut\'{o}noma de M\'{e}xico (UNAM)\\
Antonio de Le\'{o}n \#2, altos, Col. Centro\\
Oaxaca de Ju\'{a}rez, Oaxaca, CP. 68000\\
M\'{e}xico.
	   }
\email{diego.corro.math@gmail.com}
\thanks{$^{*}$Supported by the DFG (281869850, RTG 2229 ``Asymptotic Invariants and Limits of Groups and Spaces'').}
\thanks{$^{\dagger}$Supported by a DGAPA postdoctoral Scholarship of the Institute of Mathematics - UNAM}

\author{Adam Moreno}
\address{Department of Mathematics\\
University of California, Los Angeles\\
      Los Angeles, CA, USA 90095-1555
      }
\email{moreno@math.ucla.edu}

\subjclass[2010]{53C12, 53C23}
\keywords{singular Riemannian foliation, positive sectional curvature, Alexandrov spaces}

\setlength{\overfullrule}{5pt}

\begin{abstract}
We show that for a smooth manifold equipped with a singular Riemannian foliation, if  the foliated metric has positive sectional curvature, and  there exists a pre-section, that is a proper submanifold retaining all the transverse geometric information of the foliation, then the leaf space has boundary. In particular, we see that polar foliations of positively curved manifolds have leaf spaces with nonempty boundary.
\end{abstract}

\maketitle

\section*{Introduction}
All known examples of positively curved Riemannian manifolds have, in some sense or another, `large' symmetry. This observation led to the initiation of the Grove symmetry program in 1991, birthing several systematic approaches to explore the link between the isometries of positively curved manifolds and their topology. Many techniques used rely not so much on the particular subgroup of isometries considered, but on how the orbits of those isometries decompose the given manifold. Such orbit decompositions are special cases of what are more generally known as \textit{singular Riemannian foliations}. Galaz-Garcia and Radeschi used this more general framework to study positively curved manifolds carrying such foliations whose regular leaves are tori (see \cite{galaz2015singular}), generalizing some known results for torus actions as well as pointing out some interesting differences. Mendes and Radeschi, Corro, and Moreno (\cite{MendesRadeschi2019}, \cite{Corro2019}, and \cite{moreno2019}) have continued this approach, establishing singular Riemannian foliations as a possible notion of symmetry for positively curved manifolds.

When the leaves of the foliation are closed, the so-called \textit{leaf space} of the foliation is equipped with a natural metric which inherits lower curvature bounds in the comparison sense (see \cite{BBI}). This allows one to study such quotients using Alexandrov geometry. In particular, the notions of spaces of directions and boundary are easy to describe in terms of the foliation and can hence be employed to study singular Riemannian foliations and the manifolds which admit them. For example, Grove, Moreno, and Petersen showed in \cite{GroveMorenoPetersen2018} that the boundary of a leaf space is an Alexandrov space with the same lower curvature bound (in its induced intrinsic metric), answering an open question about Alexandrov spaces in this special case.

For positively curved leaf spaces, the presence of boundary already places topological restrictions on both the leaf space and the manifold (see \cite{morenophd}, Theorem~4.2.3). Moreover, placing some simple additional hypotheses on the topology of the boundary can yield strong topological implications on not just the leaf space and manifold, but also the leaves of the foliation  (see \cite{morenophd}, Theorem~4.3.1). Taking a step back, one is inclined to ask: what are some sufficient conditions that guarantee the presence of nonempty boundary?

In \cite{wilking2006positively}, Wilking showed that for positively curved manifolds, an isometric action with nontrivial principal isotropy group will have orbit space with nonempty boundary. This does not immediately generalize to leaf spaces of singular Riemannian foliations, where there is no group action. However, nontrivial principal isotropy guarantees a nontrivial \textit{core reduction} as defined for group actions by Grove and Searle in \cite{grovesearlecore}.

The core $\prescript{}{c}{M}\subset M$ of an isometric group action $(M,G)$ (together with its \textit{core group} $\prescript{}{c}{G}$) form a `reduction' of the group action. In particular, $\prescript{}{c}{G}<G$ and the orbit spaces $M/G$ and $\prescript{}{c}{M}/\prescript{}{c}{G}$ are isometric.  A related, yet more extreme, form of reduction is given by polar actions, where we have a (connected, complete) embedded submanifold $\Sigma$ (called a \textit{section}) with a finite action by a so-called \textit{polar group} $W$ (also called the \textit{Weyl group}) such that $M/G$ and $\Sigma/W$ are isometric. Despite the geometric similarities, a section of a polar action is not necessarily a \textit{core} of that action. A simple example is the $S^1$ action on $S^2$ by rotation about a fixed axis. This is a polar action, whose section is a great circle and whose polar group is $\mathbb{Z}_2$. On the other hand, the core is all of $S^2$ (see \cite{grovesearlecore} for definitions).

In \cite{gorodski2004copolarity}, the authors introduced the notion of \textit{copolarity} to define a \textit{k-section} of an isometric group action, providing a broader framework to discuss such reductions. In this language, a section of a polar action is at one extreme: it is a $0$-section, while the core of a group action $(M,G)$ is an example of a $k$-section, where $k$ is the difference between the dimension of the core and the dimension of the orbit space $M/G$. The core in the example above (the entire original manifold) is an example of a $1$-section. In \cite{magata2009reductions}, Magata referred to $k$-sections of group actions as \textit{fat sections} and proved that they are a form of reduction in the sense above (see Thm 3.1 \cite{magata2009reductions}). 

Clearly, these reductions rely on the presence of a group action. \textit{Polar foliations} generalize polar actions to the setting of singular Riemannian foliations by using the geometric properties of sections of polar actions to define a section of a foliation. This is more than simply an expanding of language, as there are polar foliations whose leaves are not the orbits of an isometric group action (most FKM-type foliations by isoparametric hypersurfaces, see \cite{Radeschi2014}). This sort of reduction is extreme in the foliation setting as well. An expansion of this notion (similar to what was done in \cite{gorodski2004copolarity}) was proposed in the thesis of Magata \cite{magata2008reductions} (where he referred to them as \textit{pre-sections}), though was not developed beyond a definition. 

Motivated by the guarantee of boundary in the presence of a nontrivial core and the more general framework of pre-sections to which cores belong, we prove the following: 

\begin{mtheorem}\label{Theorem: Core reduction} Let $\FF$ be a singular Riemannian foliation with closed leaves on a positively curved manifold $M$. If $(M,\FF)$ has a nontrivial pre-section, then $\partial(M/\FF)\neq \emptyset$.
\end{mtheorem}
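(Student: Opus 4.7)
The plan is to exploit the totally geodesic nature of a pre-section together with positive sectional curvature to produce a boundary stratum in the leaf space $M/\FF$. I would carry this out in three steps.

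\textbf{Step 1 (Reduction to a minimal pre-section).} I would first pass to a minimal nontrivial pre-section $\Sigma^* \subseteq \Sigma$ by minimizing dimension over all nontrivial pre-sections (a valid minimum since dimensions are nonnegative integers and $\dim M$ caps things off). Pre-sections being totally geodesic, $\Sigma^*$ is totally geodesic in $M$ and inherits positive sectional curvature. Because $\FF|_{\Sigma^*}$ has the same leaf space as $\FF$, it suffices to prove $\partial(\Sigma^*/\FF|_{\Sigma^*}) \neq \emptyset$.

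\textbf{Step 2 (Structural rigidity from the pre-section).} At any principal $p\in\Sigma^*$, matching codimensions across the isometry $\Sigma^*/\FF|_{\Sigma^*}\cong M/\FF$ forces
\[
T_p L_p \;=\; T_p(L_p\cap \Sigma^*) \,\oplus\, \nu_p \Sigma^*,
\]
so the full normal bundle $\nu \Sigma^*$ is tangent to the leaves of $\FF$. Intuitively, the "extra" leaf directions that the pre-section cannot see are exactly the normal directions — this is the rigidity that the nontriviality of the pre-section buys us.

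\textbf{Step 3 (Positive curvature yields boundary).} Fix a principal $p \in \Sigma^*$ and pick a unit $v\in\nu_p\Sigma^*$. The geodesic $\gamma_v(t)=\expo_p(tv)$ is $\Sigma^*$-perpendicular, so in positive curvature ($K\geq k>0$) there is a first focal point $\gamma_v(t_0)$ of $\Sigma^*$ with $t_0\leq \pi/(2\sqrt{k})$. Because $v$ is simultaneously tangent to $L_p$, the focalization of $\gamma_v$ with respect to $\Sigma^*$ is intertwined with the leaf structure. I would argue that the leaf through $\gamma_v(t_0)$ is singular (lower-dimensional than $L_p$) and that its image in $\Sigma^*/\FF|_{\Sigma^*}$ lies on $\partial(M/\FF)$.

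The main obstacle is the last claim: verifying that the singular stratum produced is genuinely a \emph{boundary} stratum of the Alexandrov quotient, not merely a lower-dimensional singular one. This reduces to exhibiting an infinitesimal reflection in the slice representation at the terminal singular leaf — a codimension-one fixed locus in its normal space of directions. In the polar case this is automatic from the Coxeter structure of the Weyl group, so the plan is to argue by degeneration: rigid Toponogov/Rauch comparisons along horizontal segments inside $\Sigma^*$, combined with the rigidity from Step 2 that forces $\nu\Sigma^*$ to be vertical everywhere, should make the local normal holonomy at the singular leaf look polar in a sufficiently strong sense to produce the needed reflection. This reflection-forcing step is where I expect essentially all the technical work to lie.
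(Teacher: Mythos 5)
Your Step 1 already contains a mistake that undermines the strategy: a pre-section $N$ does \emph{not} have the same leaf space as $(M,\FF)$. What the paper proves is that the inclusion induces a \emph{discrete submetry} $N/\FF' \to M/\FF$, not an isometry (even in the polar case one only gets $M/\FF\cong\Sigma/W$ for a Weyl group $W$, which is not the same as $\Sigma/\FF|_\Sigma=\Sigma$). Consequently, "matching codimensions across the isometry" in Step 2 is not available, and the reduction to a minimal pre-section buys nothing, since you cannot simply transfer the boundary question from $M/\FF$ to $N/\FF'$. (What you \emph{do} correctly derive in Step 2, namely that at a regular point $\nu_p\Sigma^*\subset T_pL_p$, is the content of condition (C) of a pre-section being equivalent to $\nu_p(M,N)\subset T_pL_p$ at regular $p$; but this holds only at regular points, and the whole game is precisely that it must \emph{fail} somewhere.)

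More seriously, Step 3 is not a proof but a statement of the problem. You pick a normal geodesic, observe that positive curvature produces a focal point of $\Sigma^*$, and then assert without mechanism that the leaf there is singular and that it lands on $\partial(M/\FF)$. You acknowledge this yourself: the "reflection-forcing" step "is where I expect essentially all the technical work to lie." There is no argument that a focal point of a totally geodesic submanifold must land in a boundary stratum of the quotient; in general singular strata of leaf spaces are not boundary strata, and nothing in the focal-point picture distinguishes them. The paper instead gets the crucial positive-curvature input from Wilking's dual-foliation theorem: if $\nu_q(M,L_q)\subset T_qN$ for all $q\in N$, then every horizontal geodesic from $N$ stays in $N$, so the dual leaf $L_q^{\#}\subset N$; but in positive curvature $L_q^{\#}=M$, contradicting $N\subsetneq M$. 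Hence there is a point $q$ where the relative codimension of the pre-section leaf drops strictly. The Slice Theorem then shows $V_q=\nu_q(M,L_q)\cap T_qN$ is a pre-section of the infinitesimal foliation $(\nu_q(M,L_q),\FF_q)$; after splitting off the stratum directions and restricting to unit spheres one gets a lower-dimensional, positively curved foliated sphere with a nontrivial pre-section, and the argument iterates. The induction can only terminate at a single-leaf foliation on some $\mathbb{S}^{k}$ (at $\mathbb{S}^1$ this is forced, since point leaves were removed), and "space of directions normal to the stratum is a point" is exactly the inductive Alexandrov criterion for a boundary face; unwinding the induction gives $\partial(M/\FF)\neq\emptyset$. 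Your proposal is missing both the Wilking-type input that locates a codimension drop and the slice/sphere/stratum reduction that makes the induction run; the focal-point idea, as stated, does not substitute for either.
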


Since sections of polar foliations are a special type of pre-section, we have:

\begin{mcorollary}
If $(M,\FF)$ is a closed, polar singular Riemannian foliation (that is not a single leaf) on a positively curved manifold, then $M/\FF$ has nonempty boundary.
\end{mcorollary}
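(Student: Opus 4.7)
The plan is to deduce the corollary directly from Theorem~\ref{Theorem: Core reduction} by showing that the section of a nontrivial polar foliation is automatically a nontrivial pre-section. Conceptually the corollary is simply the statement that sections, being the ``zero copolarity'' case of the pre-section framework discussed in the introduction, slot in without modification.

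First I would recall the standard structural facts about a section $\Sigma$ of a polar foliation $(M,\FF)$: namely, that $\Sigma$ is a connected, complete, totally geodesic immersed submanifold of $M$ that meets every leaf of $\FF$, meets each one orthogonally, and has dimension equal to the codimension of a principal leaf. From these properties one obtains the classical identification $M/\FF\cong \Sigma/W$, where $W$ is the generalized Weyl (polar) group, and the restricted partition $\FF|_\Sigma$ is exactly the $W$-orbit foliation. In particular, the restriction map induces an isometry $\Sigma/\FF|_\Sigma\cong M/\FF$ of metric spaces, so $\Sigma$ retains all of the transverse geometric information of $\FF$.

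Next I would compare this to the working definition of pre-section indicated in the introduction (following Magata~\cite{magata2008reductions}): a proper submanifold of $M$ whose restricted foliation carries the same transverse geometry as $(M,\FF)$. Under the standing hypothesis that $\FF$ is not a single leaf (and hence, in the polar setting, has positive-dimensional principal leaves), $\Sigma$ is properly contained in $M$ with $0<\dim\Sigma<\dim M$, so it is nontrivial as a pre-section. Applying Theorem~\ref{Theorem: Core reduction} to $\Sigma$ then yields $\partial(M/\FF)\neq\emptyset$.

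The only real obstacle is a bookkeeping one: making sure that the precise notion of pre-section used in the statement of Theorem~\ref{Theorem: Core reduction} (closedness of $\Sigma$, embedded vs.\ immersed, behavior of $\FF|_\Sigma$ at singular leaves of $\FF$) is compatible with the classical polar-section data. I would expect this to be a straightforward dictionary check rather than a substantive geometric step, because both notions are designed around the same defining property, namely that the quotient by the restricted foliation recovers the leaf space isometrically. Once this compatibility is established, no further curvature or Alexandrov-geometric argument is needed: Theorem~\ref{Theorem: Core reduction} does all of the work.
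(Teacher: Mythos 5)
Your proposal matches the paper's argument exactly: the paper states the corollary as an immediate consequence of Theorem~\ref{Theorem: Core reduction}, observing in one line that a section of a polar foliation is a pre-section. The ``dictionary check'' you defer is indeed routine—at a regular point $p$ a section $\Sigma$ meets $L_p$ orthogonally with $\dim\Sigma=\codim(M,L_p)$, so $T_p\Sigma=\nu_p(M,L_p)$ and condition \eqref{D: generalized section C)} of Definition~\ref{D: generalized section} holds, while completeness, total geodesy, and meeting every leaf are built into the definition of a section; nontriviality follows since $\dim\Sigma=\codim\FF$ is strictly between $0$ and $\dim M$ once $\FF$ is neither a single leaf nor the point foliation.
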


We also get the following special case regarding orbit spaces of isometric group actions:

\begin{mcorollary}
Let $G$ be a compact lie group acting isometrically and not transitively on a positively curved Riemannian manifold $M$. If $M$ contains a $k$-section, then $M/G$ has nonempty boundary.
\end{mcorollary}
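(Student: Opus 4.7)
The plan is to deduce this corollary directly from Theorem~\ref{Theorem: Core reduction} by packaging the isometric group action as a singular Riemannian foliation. Since $G$ is a compact Lie group acting by isometries, the orbit decomposition $\FF_G:=\{G\!\cdot\! p : p\in M\}$ is a singular Riemannian foliation; compactness of $G$ makes every orbit compact (hence closed as a leaf), and the quotient map is an isometry $M/\FF_G \cong M/G$. So the positive-curvature and closed-leaves hypotheses of Theorem~\ref{Theorem: Core reduction} are already in place, and it suffices to produce a nontrivial pre-section of $\FF_G$.

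For that I would appeal to the definitional lineage sketched in the introduction: the notion of pre-section, introduced in Magata's thesis \cite{magata2008reductions}, was set up precisely so that $k$-sections (the fat sections of \cite{magata2009reductions}) of an isometric group action would be pre-sections of the associated orbit foliation. Concretely, a $k$-section is by construction a proper embedded submanifold of $M$ that carries all the transverse geometric information of the $G$-orbit decomposition, which is exactly the defining data of a pre-section of $\FF_G$. Hence the $k$-section $N\subset M$ supplied in the hypothesis is automatically a pre-section of $\FF_G$. Non-transitivity of the action forces $\dim(M/G)>0$, and properness of $N$ in $M$ (built into the $k$-section/pre-section definition) makes this pre-section nontrivial.

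With these ingredients, Theorem~\ref{Theorem: Core reduction} applied to $(M,\FF_G)$ yields $\partial(M/\FF_G)\neq\emptyset$; transporting across the isometry $M/\FF_G\cong M/G$ gives the desired conclusion $\partial(M/G)\neq\emptyset$. The only real obstacle I foresee is the careful matching of Magata's $k$-section/fat-section definition with the pre-section definition used in the body of the paper, so that none of the transverse geometric information is lost in the translation from the group-theoretic language to the foliated one. Once that bookkeeping is verified, no further geometric argument beyond Theorem~\ref{Theorem: Core reduction} is needed, and the corollary becomes a direct specialization of it.
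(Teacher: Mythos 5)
Your proposal is exactly the intended derivation: the paper states this corollary without proof, relying on the facts (which you identify) that the orbit decomposition of a compact isometric action is a closed singular Riemannian foliation with $M/G\cong M/\FF_G$, and that a $k$-section in the sense of \cite{gorodski2004copolarity}/\cite{magata2009reductions} — being complete, totally geodesic, meeting every orbit, and containing the normal spaces to regular orbits — satisfies conditions (A), (B), (C) of Definition~\ref{D: generalized section}, so it is a pre-section, and Theorem~\ref{Theorem: Core reduction} applies. The one caveat worth flagging (and you touch on it) is that $M$ itself is always trivially a $k$-section, so the hypothesis must implicitly mean a proper $k$-section, which is what yields a nontrivial pre-section.
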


\begin{ack} We would like to thank Fernando Galaz-Garcia, Karsten Grove, Alexander Lytchak and Marco Radeschi for helpful conversations. The second author thanks the hospitality of the Department of Mathematics of the KIT, where part of the present work was carried. 
\end{ack}


\section{Preliminaries}\label{S: Preliminaries}


In this section we  present some preliminary definitions and results, which will be used later. We begin with some  definitions about singular Riemannian foliations, and  then we give a local description of singular Riemannian foliations with closed leaves.

\subsection{Singular Riemannian foliations}\label{SRF}
\ \\
Given a Riemannian manifold $M$, a \emph{singular Riemannian foliation}, which we denote by $(M,\FF)$, is a partition of $M$ by  a collection $\FF = \{L_p\mid p\in M\}$ of  connected, complete, immersed submanifolds $L_p$, called \emph{leaves}, which may not be of the same dimension, such that the following conditions hold: 
\begin{enumerate}[(i)]
\item Every geodesic meeting one leaf perpendicularly, stays perpendicular to all the leaves it meets.
\item For each point $p\in M$ 
there exists a family of vectors fields on $M$ which at any point $p\in M$, span the tangent space to the leaf through $p$.
\end{enumerate}

If the partition $(M,\FF)$ satisfies the first condition, then we say that $(M,\FF)$ is a \emph{transnormal system}. If it satisfies the second condition, we say that $(M,\FF)$ is a \emph{smooth singular foliation}. When all the leaves have the same dimension, we say that the foliation is a \emph{regular Riemannian foliation} or just a \emph{Riemannian foliation}. We will deal mainly with \textit{closed} singular Riemannian foliations - those in which every leaf is closed (compact without boundary). The interested reader can consult \cite{Alexandrino2012, Corro2019, moreno2019} for a more  detailed discussion of singular Riemannian foliations.

A standard example of a singular Riemannian foliation is the orbit decomposition of a Riemannian manifold under some group action by isometries. Such foliations are called \textit{homogeneous}, in reference to their leaves being homogeneous manifolds. Although the geometry of singular Riemannian foliations closely resembles that of orbit decompositions, there are important examples of singular Riemannian foliations which do not come from isometric group actions. The fibers of a Riemannian submersion also provide examples of singular Riemannian foliations, and a well known \textit{inhomogeneous} such foliation is given by the $S^7$ fibers of the Hopf map $S^{15}\to S^8$ (see \cite{Radeschi2014} for more examples).

For a connected manifold $M$, the \emph{dimension} of a  foliation $\FF$, denoted by $\dim \FF$, is the maximal dimension of the leaves of $\FF$. The \emph{codimension} of a foliation is,
\[
	\codim(M,\FF) = \dim M -\dim \FF.
\] 
Leaves of maximal dimension are called \emph{regular leaves} and the remaining 
leaves are called \emph{singular leaves}.
Since $\FF $ gives a partition of $M$, for each point $p\in M$ there is a unique leaf, which we denote by $L_p$, that contains $p$. We say that $L_p$ is the \emph{leaf through} $p$.\\

The quotient space $M/ \FF$ obtained from the partition of $M$, is known as the \emph{leaf space} and the quotient map $\pi \colon M \to M/\FF$ is the \emph{leaf projection map}. The topology of $M$ yields a topology on $M/\FF$, namely the quotient topology. With respect to this topology the quotient map is continuous. We denote the leaf space $M/\FF$ from this point onward by $M^\ast$ and denote by $S^\ast$ the image $\pi(S)$ of a subset $S\subset M$ under the leaf projection map. 

Given a singular Riemannian foliation $(M,\FF)$, we denote by $O(M,\FF)$ the group of isometries of $M$ which respect the foliation (map leaves to leaves) and by $O(\FF)$ the group of isometries which leaves any leaf of the foliation invariant. Observe that $O(M,\FF)/O(\FF)$ is the group of bijections of the leaf space $M/\FF$ which lift to isometries of $M$ (see \cite{MendesRadeschi2019}).

Let $(M,\FF)$ be a singular Riemannian foliation. A piecewise smooth curve $c$ is called \textit{horizontal} with respect to  the foliation $\FF$, if $c'(t)$ is in the normal space $\nu_{c(t)}(L_{c(t)})$ of the leaf $L_{c(t)}$ at $c(t)$. The \textit{dual foliation} $(M,\FF^{\#})$ of $\FF$ is given by defining for a point $p\in M$ the leaf as
\[
	L^{\#}_p = \{q\in M\mid \mbox{there is a piecewise smooth horizontal curve from } p \mbox{ to } q\}.
\]

The following is a main result from \cite{Wilking2007}  which will be central later on.

\begin{theorem}[Theorem~1 in \cite{Wilking2007}]\label{T: Wilking one leaf}
Suppose that $M$ is a complete positively curved manifold with a singular Riemannian foliation $\FF$. Then the dual foliation has only one leaf, $M$.
\end{theorem}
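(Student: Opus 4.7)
The plan is to argue by contradiction using a second variation along a minimizing horizontal geodesic between two distinct dual leaves. The strategy is to produce a strictly shorter horizontal curve joining them, contradicting minimality and forcing $\FF^{\#}$ to have a single leaf.

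First, I would establish two structural properties of $\FF^{\#}$: (a) each dual leaf is saturated by leaves of $\FF$, so in particular $L_p\subset L_p^{\#}$ and one may speak of tangent vectors to $L_p^{\#}$; and (b) a dual transnormality property, namely that a geodesic perpendicular to one dual leaf at one point remains perpendicular to every dual leaf it meets. Property (b) follows from the transnormality of $\FF$ together with a lateral holonomy perturbation that deforms horizontal curves from $p$ within $L_p^{\#}$. With these in hand, pick $p,q$ in distinct dual leaves and, using completeness of $M$, let $\gamma\colon[0,\ell]\to M$ be a unit-speed geodesic realizing $d(\overline{L_p^{\#}},\overline{L_q^{\#}})>0$. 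By transnormality of $\FF$, $\gamma$ is horizontal throughout, and by (b) it meets both endpoint dual leaves orthogonally.

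The core computation is a second variation. Choose a nonzero horizontal vector $v\in T_{\gamma(0)}L_p^{\#}$; such a $v$ exists unless $L_p^{\#}$ reduces to the single leaf $L_{\gamma(0)}$ of $\FF$, a degenerate case that one handles separately. Let $V$ be the parallel transport of $v$ along $\gamma$. Since $V$ is parallel, the second variation of length reduces to
\[
I(V,V)=-\int_0^\ell K(V(t),\gamma'(t))\,dt<0
\]
by positive sectional curvature. Hence for small $s\neq 0$ the varied curves $\gamma_s(t):=\expo_{\gamma(t)}(sV(t))$ are strictly shorter than $\gamma$.

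The main obstacle is guaranteeing that the endpoints $\gamma_s(0)$ and $\gamma_s(\ell)$ still lie in $\overline{L_p^{\#}}$ and $\overline{L_q^{\#}}$ respectively, for then $\gamma_s$ produces a shorter horizontal connection between the two dual leaves and contradicts the choice of $\gamma$. The left endpoint is automatic, since $v\in T_{\gamma(0)}L_p^{\#}$ and $L_p^{\#}$ is locally spanned by horizontal curves issuing from $\gamma(0)$. The right endpoint, however, demands a parallel-transport invariance for dual leaves: the horizontal tangent space of $L_p^{\#}$ must be carried by parallel translation along $\gamma$ into the horizontal tangent space of $L_q^{\#}$. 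Establishing this invariance is the heart of the argument and the place where positive curvature is genuinely consumed; it combines the dual transnormality from (b) with a careful analysis of the holonomy of $\FF$ and the structure of horizontal Jacobi fields along $\gamma$. Once it is in place, the variation $\gamma_s$ contradicts the minimality of $\gamma$, so no two distinct dual leaves can coexist and $\FF^{\#}$ has $M$ as its unique leaf.
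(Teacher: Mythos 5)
This theorem is quoted from Wilking's 2007 paper; the present manuscript gives no proof, so the comparison must be against Wilking's own argument. Your high-level strategy — contradiction via a second-variation argument along a minimal horizontal geodesic between two dual leaves, with a parallel field that must stay tangent to the dual foliation — is genuinely the right idea and does resemble Wilking's approach in spirit. However, the proposal contains real gaps at precisely the places where the content of the theorem lives.

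First, your structural property (b), \emph{dual transnormality}, is not a consequence of the transnormality of $\FF$ plus a ``lateral holonomy perturbation.'' The statement that $\FF^{\#}$ is a singular Riemannian foliation is itself one of the main results of Wilking's paper, is proved under a nonnegative curvature hypothesis, and is false without curvature assumptions. You cannot take it as a structural fact; it must be established, and doing so occupies a substantial part of Wilking's argument. Second, you explicitly flag the parallel-transport invariance of dual-leaf tangent spaces as ``the heart of the argument'' and then do not prove it. This is not a detail that can be deferred: it is \emph{the} technical lemma, and in Wilking's paper it is established via a convexity/rigidity argument (the norm of the component of a parallel field normal to the dual leaf is convex along horizontal geodesics in nonnegative curvature, hence vanishes identically if it vanishes initially). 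Without this, the variation $\gamma_s$ is not known to stay in the dual leaves and the contradiction never materializes.

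Beyond these two, there are additional untreated issues. Dual leaves need not be closed, so the existence of a geodesic realizing $d(\overline{L_p^{\#}},\overline{L_q^{\#}})>0$ requires justification (indeed Wilking proves completeness of the dual leaves under nonnegative curvature before any such geodesic can be extracted). Parallel transport of a horizontal vector along a horizontal geodesic is not horizontal in general, so even the left endpoint claim is not ``automatic'' without further argument about which class of variations one is using. Finally, dual leaves are not totally geodesic, so the boundary terms in the second variation formula (involving the second fundamental forms of $L_p^{\#}$ and $L_q^{\#}$) do not simply vanish; these must be controlled. In short: the skeleton of the argument is the right one, but every load-bearing step is asserted rather than proved.
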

\vspace{.5cm}


\subsection{Infinitesimal foliations}\label{Ss: Infinitesimal foliation}
\ \\
Let $(M,\FF)$ be a closed manifold with a closed singular Riemannian foliation. In this section we describe the foliation around a  point, and in a tubular neighborhood of a leaf.

For fixed $p\in M$, let $\Sp^\perp_p$ be the unit sphere in the normal subspace of $T_p L_p\subset T_pM$. The \textit{infinitesimal foliation} $\FF_p$ on $\Sp^\perp_p$ is given by taking the connected components of  the preimages  under the exponential map at $p$  of the intersection between the leaves of $\FF$ and $\exp_p(\Sp^\perp_p)$. By \cite[Proposition~6.5]{Molino} this partition is a singular Riemannian foliation when we consider the round metric on $\Sp^\perp_p$. Traditionally, an \textit{infinitesimal foliation} $(V,\FF)$ refers to a singular Riemannian foliation of a Euclidean space $V$ containing the origin as a leaf. Since any such foliation is the cone of the foliation on the unit sphere in $V$, we use this term to refer to both foliations (the context will make it clear which is meant).

There is a group morphism $\rho_p\colon \pi_1(L_p,p)\to O(\Sp^\perp_p,\FF_p)/O(\FF_p)$. Denote by $\hol_{p}$ the image of $\pi_1(L_p,p)$ under this morphism. The group $\hol_{p}$ is known as the \emph{leaf holonomy group  of $L_p$}. In particular, the group $\Gamma_p$ acts effectively by isometries on the leaf space $\Sp_p^{\perp}/\FF_p$. 
The interested reader can consult \cite[Section~3.2]{MendesRadeschi2019} or \cite[Appendix~A]{Corro2019} for more details on the definition of this group morphism. Leaves of maximal dimension with $\hol_{p}$ equal to the trivial group are called \emph{principal leaves} (see \cite{Corro2019}).

\subsection{Alexandrov Geometry of Leaf Spaces}
\ \\
We briefly mention some concepts from Alexandrov geometry that we will later need.

A locally compact, locally complete inner metric space is an \emph{Alexandrov space $(X,d)$} if it satisfies local lower curvature bounds as in Topogonov’s Theorem (see \cite{BGP}). In the case that the lower curvature bound is $k$, we write either $curv(X)\geq k$ or $X\in Alex(k)$.

The \emph{dimension} of an Alexandrov space $X$ is equal to the Hausdorff dimension of $X$. In particular for $(M,\FF)$ a closed singular Riemannian foliation on a complete connected manifold, the dimension of $M^\ast$ equals the codimension of $\FF$.

Without tangent spaces, Alexandrov spaces do not have the usual `tangent sphere' as manifolds do. Instead, one describes an analogous concept, the so called \textit{space of directions}, using the metric and comparisons to a model space as follows.

Consider $X\in Alex(k)$. Given two curves $c_1\colon [0,1]\to X$ and $c_2\colon [0,1]\to X$ with $c_1(0) = c_2(0) = x\in X$, we define the \emph{angle between $c_1$ and $c_2$} as
\[
	\angle (c_1,c_2) := \lim_{s,t\to 0} \tilde\angle (c_1(s),x,c_2(t)).
\]
where $\tilde{\angle} (c_1(s),x,c_2(t))$ is the angle in the comparison triangle in the appropriate model space of constant curvature $k$.

\begin{rmk}
Given a closed singular Riemannian foliation $(M,\FF)$, the leaf projection map $\pi:M\to M^*$ is an example of a \textit{submetry} - a map between metric spaces which takes metric balls to metric balls of equal radius (i.e. $\pi(B_r(x))=B_r(\pi(x))$). Because of this, any lower curvature bound of $M$ (in the comparison sense) is inherited by $M^*$. Hence, if $(M,\FF)$ is a singular Riemannian foliation with closed leaves and $sec(M)\geq k$, then the leaf space $M^*$ is an Alexandrov space with $curv(M^*)\geq k$ (with respect to the metric induced by the Hausdorff distance between the leaves in $M$).
\end{rmk}

A curve $c\colon [0,1] \to X$ is a \emph{geodesic} if the length of $c$ equals the distance $d(c(0),c(1))$. Two geodesics $c_1\colon [0,1]\to X$ and $c_2\colon [0,1]\to X$ emanating from a common fixed point $x\in X$ are said to be \emph{equivalent} if the angle between them is zero. The set $\wtilde{\Sigma}_{x}$ of these equivalence classes becomes a metric space by declaring the distance between two classes to be the angle formed between any two representatives of each class. The \emph{space of directions $\Sigma_{x}(X)$} at $x$ of $X$ is the metric completion of the space $\wtilde{\Sigma}_x$. The following is a well known collection of results which will be crucial for of our ``inductive'' proof of the main theorem.

\begin{theorem}[see \cite{BGP}]
Let $X$ be an Alexandrov space of dimension $n$. Then for any $x\in X$, the space of directions $\Sigma_x (X)$ is a compact Alexandrov space with curvature at least $1$, and of dimension $n-1$.
\end{theorem}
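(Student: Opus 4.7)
The plan is to establish all three assertions --- compactness, curvature $\geq 1$, and dimension $n-1$ --- by realizing $\Sigma_x(X)$ as the unit sphere of the tangent cone $T_xX$ at $x$, and pushing geometric data from $X$ to $T_xX$ to $\Sigma_x X$ via a blow-up followed by a cone rigidity statement.

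First I would construct the tangent cone $T_xX$ as the pointed Gromov--Hausdorff limit of the rescaled pointed spaces $(X,\lambda d,x)$ as $\lambda\to\infty$. Local compactness of $X$ together with standard precompactness guarantees existence of the limit along any sequence. Rescaling sends the comparison curvature bound $k$ on $X$ to $k/\lambda^2$ on $(X,\lambda d,x)$, so the Toponogov comparison survives in the limit with bound $0$, giving $\curv(T_xX)\geq 0$.

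Next I would identify $T_xX$ isometrically with the Euclidean cone $C(\Sigma_xX)$. For two geodesics $\gamma_1,\gamma_2$ emanating from $x$ with angle $\alpha$, the very definition of $\alpha$ as a limit of comparison angles forces $d(\gamma_1(s),\gamma_2(t))^2 \to s^2+t^2-2st\cos\alpha$ in the rescaled limit, which is exactly the cone distance on $C(\Sigma_xX)$. This yields an isometric embedding $C(\wtilde{\Sigma}_x)\hookrightarrow T_xX$; passing to the metric completion and noting that rescaled curves issuing from $x$ are dense in $T_xX$ gives $T_xX\cong C(\Sigma_xX)$. With this identification, the classical cone rigidity theorem --- a Euclidean cone $C(Y)$ over a compact inner metric space $Y$ has curvature $\geq 0$ if and only if $Y$ has curvature $\geq 1$ and $\diam(Y)\leq \pi$ --- applied to $T_xX$ immediately gives $\curv(\Sigma_xX)\geq 1$.

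For compactness, any sequence $\xi_k\in \Sigma_xX$ can be represented by geodesics $\gamma_k$ from $x$; for fixed small $r>0$ the points $\gamma_k(r)$ lie in the compact ball $\overline{B_r(x)}\subset X$, and monotonicity of comparison angles on an Alexandrov space lets one extract a subsequence whose directions converge in angle. For the dimension statement, the Hausdorff dimension of a Euclidean cone $C(Y)$ over a compact inner metric space $Y$ equals $\dime Y+1$; since $\dime(T_xX)=n$, we conclude $\dime(\Sigma_xX)=n-1$. The main obstacle is the cone identification step: producing the canonical blow-up at $x$ and proving it is genuinely a metric cone whose link recovers $\Sigma_xX$. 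This is where the full strength of Toponogov comparison, together with careful uniform control of geodesics issuing from $x$ across the blow-up sequence, is required.
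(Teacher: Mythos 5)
The paper does not prove this theorem; it is quoted verbatim as a standard fact from Burago--Gromov--Perelman. So there is no in-paper argument to compare against. Your task is therefore to assess whether your sketch is a sound route to the classical result, and here there are some genuine gaps.

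Your plan reverses the usual order of the steps in a way that creates a circularity hazard. You want to (i) build the tangent cone $T_xX$ as a blow-up limit, (ii) identify $T_xX$ with $C(\Sigma_xX)$, (iii) read off $\curv(\Sigma_x)\geq 1$ from cone rigidity, and only then (iv) prove compactness of $\Sigma_x$. But Gromov precompactness only gives existence of \emph{subsequential} pointed GH limits of $(X,\lambda d,x)$; it does not by itself give a canonical limit. The standard way to get a well-defined tangent cone independent of the blow-up sequence (as in Burago--Burago--Ivanov, Chapter~10) is to \emph{first} prove that $\Sigma_x$ is compact, then form the metric cone $C(\Sigma_x)$, and then show that the rescaled pointed spaces converge to this fixed cone. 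Your step (ii), "passing to the metric completion and noting that rescaled curves issuing from $x$ are dense in $T_xX$," silently assumes both that the limit is unique and that geodesic directions control all of $T_xX$; the second point is essentially the statement that $\tilde\Sigma_x$ is dense in the limit, which again is most cleanly established after compactness of $\Sigma_x$.

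The compactness step itself is also thinner than it reads. Extracting a subsequence with $\gamma_k(r)$ converging in a compact ball is fine, but to conclude that the \emph{directions} converge in the angle metric you need the quantitative comparison estimate that nearby endpoints at a fixed scale force small upper angles, and you need to address the fact that $\Sigma_x$ is defined as a completion, so not every $\xi_k$ is a geodesic direction to begin with. Both points are fixable, but they are precisely the content of the proof, not bookkeeping. Once compactness and $\diam(\Sigma_x)\leq\pi$ are in hand, your cone rigidity step and the dimension count $\dime C(Y)=\dime Y+1$ are correct and standard. So the skeleton of your argument matches the modern tangent-cone approach, but it needs the logical order repaired (compactness and $\diam\leq\pi$ before the cone identification, cone identification via convergence to the \emph{fixed} cone $C(\Sigma_x)$ rather than via a subsequential limit) to actually close.
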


For leaf spaces of singular Riemannian foliations, we have 

\begin{prop}[see p.~4 in \cite{moreno2019}]\label{L: Space of directions isometric to quotient of holonomy and infinitesimal foliation}
The space of directions of the Alexandrov space $M^\ast$ at $p^\ast$, consists of geodesic directions and is isometric to $(\Sp_p^\perp/\FF_p)/\hol_{p}$.
\end{prop}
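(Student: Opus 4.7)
The plan is to invoke the local structure theorem for closed singular Riemannian foliations near a leaf and then read the space of directions directly off of the resulting metric cone model. Concretely, a tubular neighborhood of $L_p$ in $M$ is foliated-diffeomorphic to an associated bundle built from the holonomy representation of $\pi_1(L_p,p)$ on the infinitesimal foliation $(\Sp^\perp_p,\FF_p)$; passing to leaf spaces identifies a neighborhood of $p^\ast$ in $M^\ast$ with a metric neighborhood of the apex in the Euclidean cone $C\bigl((\Sp^\perp_p/\FF_p)/\hol_{p}\bigr)$.

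I would carry out the argument in three steps. First, I would invoke the slice-type description recalled in Section~\ref{Ss: Infinitesimal foliation} (see also \cite{Molino, MendesRadeschi2019, Corro2019}) to produce a foliated tubular neighborhood $\Tub(L_p)$ whose leaf space is locally isometric to the cone above. Second, I would apply the general fact that for any compact space $Y$ with $\curv(Y)\geq 1$, the apex of the Euclidean cone $C(Y)$ has space of directions canonically isometric to $Y$, and every direction at the apex is realized as the initial tangent of a radial ray. Combining these two observations yields the isometry $\Sigma_{p^\ast}(M^\ast)\cong (\Sp^\perp_p/\FF_p)/\hol_{p}$ and simultaneously shows that every direction at $p^\ast$ is a geodesic direction.

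Third, to justify that the local identification with the cone is metric (not merely topological or diffeomorphic), I would use two ingredients: the normal exponential $\exp_p\colon \nu_p(L_p)\to M$ is a radial isometry in a neighborhood of $p$, and $\pi\colon M\to M^\ast$ is a submetry, so horizontal angles upstairs agree with angles between the corresponding initial directions downstairs. Together with the fact that $\hol_p$ acts by isometries on $\Sp^\perp_p/\FF_p$, this ensures the comparison angles $\cang(\pi(\exp_p(sv)),p^\ast,\pi(\exp_p(tw)))$ converge, as $s,t\to 0$, to the distance between the classes of $v$ and $w$ in $(\Sp^\perp_p/\FF_p)/\hol_p$. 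A typical geodesic direction at $p^\ast$ is then represented by $t\mapsto \pi(\exp_p(tv))$ for some $v\in\Sp^\perp_p$, and the assignment $[v]\mapsto [t\mapsto \pi(\exp_p(tv))]$ is the required isometry.

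I expect the main obstacle to be invoking the correct slice-type structure theorem: namely, that a foliated tubular neighborhood of $L_p$ is metrically modeled on the associated bundle built from $\hol_p$ acting on the cone $(\nu_p(L_p), C\FF_p)$. Once this ingredient is in place, the metric cone description of the space of directions at an apex, together with the submetry property of $\pi$, deliver both the isometry and the geodesic-directions statement without further calculation.
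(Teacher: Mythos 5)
The paper does not prove this proposition at all: it is stated as an imported fact with the citation to p.~4 of \cite{moreno2019}, so there is no internal proof to compare your argument against. Taken on its own terms, your sketch is the standard route to this result and is sound in spirit, but two points deserve care.

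First, your opening sentence overstates what the slice theorem gives: a tubular neighborhood of $L_p$ is foliated-\emph{diffeomorphic} (not isometric) to the associated bundle, so the induced identification of a neighborhood of $p^\ast$ with a piece of $C\bigl((\Sp^\perp_p/\FF_p)/\hol_p\bigr)$ is only topological. It is not literally a metric neighborhood of the apex; rather, the blow-ups $\bigl(\tfrac1s M^\ast,p^\ast\bigr)$ converge to that cone as $s\to0$, which is precisely the content of your third step. Second, invoking ``$\pi$ is a submetry, so horizontal angles agree with angles downstairs'' is too quick: submetries give that horizontal geodesics project to geodesics and that distances are non-increasing, but the comparison-angle limit at $p^\ast$ is not immediate from this. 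The real work is in the displayed claim that $\cang\bigl(\pi(\exp_p(sv)),p^\ast,\pi(\exp_p(tw))\bigr)$ converges to $d_{(\Sp^\perp_p/\FF_p)/\hol_p}\bigl([v],[w]\bigr)$; this requires a first-order Taylor expansion of the leaf-space distance using the Gauss lemma upstairs, together with the fact that the quotient distance is achieved (in the limit) by optimizing over the $\FF_p$-leaves through $v,w$ and their holonomy translates, which is where the double quotient appears. Once that limit is in place, the identification of $\Sigma_{p^\ast}$ with $(\Sp^\perp_p/\FF_p)/\hol_p$ and the completeness of the set of geodesic directions (because that quotient is already compact) follow exactly as you indicate, so the plan is viable once the comparison-angle computation is written out in full.
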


It is worth mentioning here that for a principal leaf $L_p\subset M$, the infinitesimal foliation $(\Sp_p^{\perp},\FF_p)$ is a foliation by points and the leaf holonomy $\Gamma_p$ is trivial. Thus for $p\in M$ contained in a principal leaf, the space of directions at $p^*\in M^*$ is $\Sigma_{p^*}\cong\Sp_p^{\perp}$.

From the description of singular Riemannian foliations on round spheres in \cite{Radeschi2012}, and the discussion in \cite[pp.~25--28]{morenophd}, we have

\begin{lemma}\label{L: leaves in a geodesic pointing in a regular direction are regular}
	Consider $(M,\FF)$ a closed singular Riemannian foliation. Fix $p\in M$ and consider $v\in\Sp^\perp_p$. Then  $\mathcal{L}_v$ is a regular leaf of $(\Sp_p^\perp,\FF_p)$ if and only if for $q = \exp_p(v)$, the leaf $L_q$ is a regular leaf. Moreover, if $q$ is in a regular leaf, then  for $t\in (0,1]$, the leaf $L_{\gamma(t)}$ is a regular leaf.
\end{lemma}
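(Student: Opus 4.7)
My plan is to reduce the lemma to the local description of $(M,\FF)$ near $L_p$ provided by the infinitesimal foliation $\FF_p$. The essential input, a consequence of Molino's slice-theorem description of singular Riemannian foliations, is the dimension identity
\[
\dim L_{\exp_p(tv)} \;=\; \dim L_p + \dim \mathcal{L}_v,
\]
valid for $v \in \Sp^\perp_p$ and all $t > 0$ for which $\exp_p(tv)$ lies in a tubular neighborhood of $L_p$. This identity simply expresses the fact that, via the bundle-twisted cone structure on the slice, a leaf through a nearby point decomposes into an $L_p$-direction together with an infinitesimal-leaf direction $\mathcal{L}_v$.

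First I would use density of the regular stratum of $(M,\FF)$ to pin down the maximum leaf dimension in $\FF_p$: any tubular neighborhood of $L_p$ contains some $\exp_p(t_0 w_0)$ lying in a regular leaf, so the identity above forces
\[
\max_{w \in \Sp^\perp_p} \dim \mathcal{L}_w \;=\; \dim \FF - \dim L_p.
\]
The ``iff'' part of the lemma is then the chain of equivalences
\[
\mathcal{L}_v \text{ regular in } \FF_p \;\Longleftrightarrow\; \dim \mathcal{L}_v \text{ maximal} \;\Longleftrightarrow\; \dim L_{\exp_p(tv)} = \dim \FF \;\Longleftrightarrow\; L_{\exp_p(tv)} \text{ regular},
\]
specialized at $t=1$ (with the understanding that $v$ is taken so that $\exp_p(v)$ lies in the local slice where the above description applies).

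The ``moreover'' statement then follows immediately: if $L_q$ is regular, the iff gives that $\mathcal{L}_v$ is regular in $\FF_p$, and the dimension identity applied at each $t \in (0,1]$ yields $\dim L_{\gamma(t)} = \dim L_p + \dim \mathcal{L}_v = \dim \FF$, so $L_{\gamma(t)}$ is regular throughout the interval. The main obstacle is a technical one: the dimension identity is a priori only a local fact near $t=0$, and one must make sure it applies uniformly for every $t \in (0, 1]$. This is the implicit local-scope hypothesis built into the formulation (and into the cited work of Radeschi and Moreno), amounting to the requirement that the segment $\gamma([0,1])$ stay within a tubular neighborhood of $L_p$ where the slice-theorem identification is valid; it can be arranged by rescaling $v$ if necessary.
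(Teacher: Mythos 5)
Your argument matches what the paper implicitly invokes: the lemma is stated without proof and attributed to Radeschi~\cite{Radeschi2012} and pp.~25--28 of Moreno's thesis~\cite{morenophd}, and what those sources supply is exactly the slice-theorem dimension relation $\dim L_{\exp_p(tv)} = \dim L_p + \dim \mathcal{L}_v$ that you isolate as the key input, which combined with density of the regular stratum yields the stated equivalences. You also correctly flag the implicit local scope --- the segment $\gamma([0,1])$ must lie in a tube around $L_p$ where the slice description applies --- which is indeed how the lemma is used throughout the paper (with $v$ rescaled so that $\exp_p(v)$ stays in the tubular neighborhood).
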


Moreover, it is true that all strata of $\FF$ (components of leaves of the same dimension) whose closure contains the leaf $L_p$ appear as strata of $(\mathbb{S}_p^{\perp},\FF_p)$. In particular, nearby leaves of the same dimension as $L_p$ appear as 0-dimensional leaves in $\FF_p$. If $\mathbb{S}_p^{\perp}$ is an $n$-dimensional sphere, we have the following splitting given by Radeschi in \cite{Radeschi2012}: \[(\mathbb{S}^n,\FF_p)\cong(\mathbb{S}^k,\FF_0)*(\mathbb{S}^{n-k-1},\FF_1),\] where $\FF_0$ is a foliation by points and $\FF_1$ is a foliation containing no point leaves. We refer to $(\mathbb{S}^{n-k-1},\FF_1)$ as the \textit{infinitesimal foliation normal to the stratum of $L_p$} and refer to the quotient $\mathbb{S}^{n-k-1}/\FF_1$ as the \textit{space of directions normal to the stratum of $L_p$}. Since $\mathbb{S}^k/\FF_0=\mathbb{S}^k$, spaces of directions normal to strata are where \textit{boundary} will be detected, as defined for Alexandrov spaces below:

\begin{definition}
Let $X$ be an Alexandrov space. The \textit{boundary} of $X$, denoted $\bdy(X)$ is defined inductively as  \[ \partial X := \{x\in X\mid \partial \Sigma_{x}(X) \neq \emptyset\}.\] Where we use the fact that spaces of directions are compact positively curved Alexandrov spaces with $\dim(\Sigma_{x}(X))=\dim(X)-1$ and the only such 1-dimensional Alexandrov spaces are circles or closed intervals (with diameter $\leq \pi$ in both cases).
\end{definition}




\section{Reductions of the foliation}

In this section we present the definition of  a pre-section for a closed singular Riemannian foliation, and lemmas that will be used in the proof of Theorem~\ref{Theorem: Core reduction}.

\subsection{Pre-sections}
Let us begin by giving the definition of a pre-section:
\begin{definition}\label{D: generalized section}
Let $(M, \FF)$ be a closed singular Riemannian foliation on a complete manifold. A connected embedded submanifold $N\subset M$ is a \emph{pre-section of $(M,\FF)$} if the following are satisfied: 
\begin{enumerate}[(A)]
	\item\label{D: generalized section A)} $N$ is complete, totally geodesic,
	\item\label{D: generalized section B)} $N$ intersects every leaf of $\FF$,
	\item\label{D: generalized section C)} for every point $p$ in $N\cap M_{\reg}$ we have $\nu_{p}(M, L_p) \subset T_p N$.
\end{enumerate}

We say that a pre-section $N\subset M$ is \textit{nontrivial} if $N$ is a proper submanifold and is not a single point.
\end{definition}

Note that condition \eqref{D: generalized section C)} implies that a pre-section intersects the regular leaves transversally.\\

Consider $\FF'$ to be partition of $N$ into the connected components of the intersections of $N$ with the leaves in $\FF$, i.e.\ the restriction of $\FF$ to $N$. We start by proving the following lemma for regular leafs of $\FF$.

\begin{lemma}\label{L: Closest point to regular leaf is in the normal sphere}
Consider $p\in N$ such that $L_p$ is a regular leaf of $\FF$. Assume $q\in N$ is a closest point in $L_q\cap N$ to $p$. Then $q$ is a closest point to $p$ in $L_q$ and $q\in \exp_p(\Sp^\perp_p)$.
\end{lemma}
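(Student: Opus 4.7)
The plan is to use two classical facts and the three defining properties of a pre-section. The first fact is that the leaf projection $\pi\colon M\to M^{\ast}$ is a submetry (noted in the Preliminaries), so $d(p,L_q)=d(p^{\ast},q^{\ast})$ and this distance is actually realised by some point in $L_q$. The second fact is the standard property of singular Riemannian foliations that any segment realising the distance from a point to a leaf is horizontal, i.e.\ perpendicular to every leaf it meets; this is the feature we will exploit to land inside $N$.

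First I would pick $\tilde{q}\in L_q$ minimising $d(p,L_q)$ and let $\tilde\gamma\colon[0,1]\to M$ be a unit-speed minimising geodesic from $p$ to $\tilde q$. By the horizontality of distance-realising segments, $\tilde\gamma$ is horizontal, so in particular $\tilde\gamma'(0)\in \nu_p(L_p)$. Because $L_p$ is regular and $p\in N$, condition \eqref{D: generalized section C)} of a pre-section gives $\nu_p(L_p)\subset T_pN$, hence $\tilde\gamma'(0)\in T_pN$. Since $N$ is complete and totally geodesic by \eqref{D: generalized section A)}, the $M$-geodesic with this initial velocity stays in $N$ for all time; therefore $\tilde\gamma([0,1])\subset N$ and, in particular, $\tilde q\in L_q\cap N$.

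Now I would close the loop: because $q$ was chosen to be a closest point to $p$ in $L_q\cap N$, we have
\[
d(p,q)\le d(p,\tilde q)=d(p,L_q)\le d(p,q),
\]
so equality holds throughout. Thus $q$ itself realises $d(p,L_q)$, proving the first assertion. Applying horizontality once more, but this time to a minimising segment $\gamma$ from $p$ to $q$ (which is now a segment from $p$ to the leaf $L_q$ of minimal length), we conclude that $\gamma$ is horizontal, so $\gamma'(0)\in \nu_p(L_p)=\Sp^{\perp}_p$ (up to rescaling). This yields $q=\exp_p\!\bigl(d(p,q)\,\gamma'(0)\bigr)\in \exp_p(\Sp^{\perp}_p)$, as required.

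The only non-routine point is the horizontality of a minimising segment from $p$ to $\tilde q\in L_q$; every other step is a one-line invocation of a pre-section axiom (totally geodesic, meets every leaf, contains the normal space at regular points). In particular, no hypothesis on $L_q$ being regular is needed, which is fortunate since we only know $L_p$ is regular.
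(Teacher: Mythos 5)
Your proof is correct and follows essentially the same route as the paper's: take a distance-realizing point $\tilde q\in L_q$, use that the minimizing segment to a leaf is horizontal, invoke condition \eqref{D: generalized section C)} at the regular point $p$ together with total geodesy of $N$ to land $\tilde q$ in $L_q\cap N$, and then sandwich $d(p,q)$ between $d(p,\tilde q)$ and itself. The only cosmetic difference is that the paper phrases the opening as ``assume there is a $q'\in L_q$ closest to $p$'' rather than asserting its existence by compactness/submetry, and is slightly terser about the final horizontality step, which you spell out explicitly.
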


\begin{proof}
Assume there exists $q'\in L_q$ different from $q$ which is a closest point to $p$ in $L_q$. From this it follows that $q' = \exp_p(v)$ for some $v\in \Sp_p^\perp$. Since $p$ is a regular leaf, then condition \eqref{D: generalized section C)} implies that  $\Sp_p^\perp\subset T_pN$. Thus the minimizing geodesic in $M$ joining $p$ to $q'$ given by $\gamma(t) = \exp_p(tv)$, is a geodesic in $N$. But this implies that $q'\in L_q\cap N$, and the distance from $q$ to $p$ is larger or equal than the distance from $q'$ to $p$. The distance can not be strictly larger, since this would contradict the fact that $q$ is a closest point to $p$ in $L_q\cap N$. Thus the distance from $q$ to $p$ in $M$ realizes the distance from $L_q$ to $p$. Therefore, there exists $v_0\in \Sp^\perp_p$ with $q = \exp_p(v_0)$.
\end{proof}

\begin{lemma}
Consider $(M,\FF)$ be a closed singular Riemannian foliation on a complete manifold. Let $N\subset M$ be a pre-section of $(M,\FF)$. Then, the partition $(N,\FF')$ is a singular Riemannian foliation with respect to the induced metric of $M$ on $N$.
\end{lemma}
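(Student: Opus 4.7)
The plan is to verify the two defining axioms of a singular Riemannian foliation for $(N,\FF')$ with the induced metric: (i) the transnormal property and (ii) the existence of smooth local vector fields spanning the tangent spaces of the leaves. Condition \eqref{D: generalized section B)} guarantees that $\FF'$ really partitions $N$, condition \eqref{D: generalized section A)} makes every geodesic of $N$ a geodesic of $M$, and condition \eqref{D: generalized section C)} controls the relevant normal spaces at regular points.

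First I would verify both axioms at regular points $p\in N\cap M_{\reg}$. Since $L_p$ meets $N$ transversally by \eqref{D: generalized section C)}, one obtains the tangent-space identifications
\[
T_pL'_p \;=\; T_pL_p\cap T_pN, \qquad \nu_p(N,L'_p)\;=\;\nu_p(M,L_p).
\]
Transnormality then reduces to that of $\FF$: a geodesic $\gamma$ in $N$ leaving $p$ perpendicularly to $L'_p$ has $\gamma'(0)\in\nu_p(M,L_p)$, so as a geodesic of $(M,\FF)$ it is horizontal, giving $\gamma'(t)\perp T_{\gamma(t)}L_{\gamma(t)}\supseteq T_{\gamma(t)}L'_{\gamma(t)}$ at every $t$. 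For smoothness, take $\FF$-spanning smooth vector fields $X_1,\dots,X_k$ in an $M$-neighborhood of $p$ and let $X'_i$ be their pointwise orthogonal projections onto $TN$. The dual of \eqref{D: generalized section C)} reads $\nu_q(M,N)\subseteq T_qL_q$ at every regular $q\in N$, so the projection only subtracts vectors already tangent to $L_q$; the image of $T_qL_q$ under orthogonal projection onto $T_qN$ is precisely $T_qL_q\cap T_qN=T_qL'_q$, so the $\{X'_i(q)\}$ span as required.

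Next I would propagate both properties through the singular stratum by a density argument. Since $M_{\reg}$ is open and dense in $M$, and \eqref{D: generalized section B)} forces $N$ to meet the regular stratum, the subset $N\cap M_{\reg}$ is open and dense in $N$. For transnormality at a singular $p\in N$, I would approximate a geodesic $\gamma$ in $N$ perpendicular to $L'_p$ by geodesics $\gamma_n$ through regular points $p_n\to p$ with initial vectors converging to $\gamma'(0)$; the horizontal condition $\gamma'_n(t)\perp T_{\gamma_n(t)}L_{\gamma_n(t)}$ passes to the limit by upper semi-continuity of the tangent spaces to the leaves of $\FF$. For the smoothness axiom, the projected fields $X'_i$ are smooth everywhere on $N$, and combining the regular-point computation with the infinitesimal description of $\FF$ at $p$ via $(\Sp_p^\perp,\FF_p)$ forces $\{X'_i(p)\}$ to span the full tangent space $T_pL'_p$.

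The step I expect to be the main obstacle is exactly this passage from regular to singular points. At a singular $p\in N$, condition \eqref{D: generalized section C)} is not directly available: a priori $\nu_p(M,N)$ may have components outside $T_pL_p$, which would simultaneously break the identification $T_pL'_p=T_pL_p\cap T_pN$ and destroy tangency to $L_p$ under the projection used to build the $X'_i$. The technical core is to show, by combining \eqref{D: generalized section A)} and \eqref{D: generalized section C)} with a limit from regular points and the local structure of the SRF around the singular leaf $L_p$, that the orthogonal splitting
\[
T_pN \;=\; (T_pN\cap T_pL_p)\;\oplus\;(T_pN\cap \nu_p(M,L_p))
\]
persists even at singular $p$; this is precisely the condition needed for both SRF axioms to pass from $N\cap M_{\reg}$ to all of $N$.
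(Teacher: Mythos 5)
Your overall plan matches the paper's: smoothness via projecting the $\FF$-spanning fields to $TN$, transnormality first at regular points and then propagated to singular ones by density, with the singular case flagged (correctly) as the delicate step. At regular points your argument is in fact more direct than the paper's: the paper reaches transnormality through a ``closest point in $L_q\cap N$ is closest in $L_q$'' lemma, whereas you simply observe $\nu_p(N,L'_p)=\nu_p(M,L_p)$ and invoke transnormality of $\FF$ along the resulting $M$-horizontal geodesic, then use $T L'\subset TL$. Both are correct; yours is shorter and avoids an auxiliary lemma. Your verification that the orthogonal projection of $T_qL_q$ onto $T_qN$ equals $T_qL_q\cap T_qN$ (using $\nu_q(M,N)\subset T_qL_q$) is also the right computation and is more explicit than the paper's ``it is clear.''

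The one place your sketch has a genuine gap is the singular-starting-point case, and it is exactly where the paper does something different. You propose to approximate $\gamma$ by geodesics $\gamma_n$ through regular $p_n\to p$ ``with initial vectors converging to $\gamma'(0)$'' that are perpendicular to $L'_{p_n}$. But at regular $p_n$ one has $\dim\nu_{p_n}(N,L'_{p_n})=\codim(M,\FF)$, while at the singular $p$ one has $\dim\nu_p(N,L'_p)=\dim N-\dim L'_p>\codim(M,\FF)$; the normal spaces at nearby regular points are strictly smaller, so a generic $\gamma'(0)\in\nu_p(N,L'_p)$ is \emph{not} a limit of vectors in $\nu_{p_n}(N,L'_{p_n})$, and the approximating geodesics you need simply do not exist. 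The paper circumvents this by fixing a nearby target point $q$ on $\gamma$ and, for each regular $p_n\to p$, taking $\gamma_n$ to be the \emph{minimizing} geodesic from $L_{p_n}\cap N$ to $L_q\cap N$; such a $\gamma_n$ is automatically orthogonal to both end leaves by first variation, with no need to approximate $\gamma'(0)$ directly, and passing to the limit at the $q$-end gives orthogonality of $\gamma$ to $L_q\cap N$. If you keep your structure, replacing ``geodesics with initial vectors converging to $\gamma'(0)$'' by ``minimizing geodesics between the intersected leaves'' is what closes the gap. Your concluding remark about the persistence of the splitting $T_pN=(T_pN\cap T_pL_p)\oplus(T_pN\cap\nu_p(M,L_p))$ at singular $p$ is a reasonable thing to want, but note it is not what the paper establishes either; the paper's route deliberately avoids having to prove it by working with minimizing geodesics and the dense regular stratum.
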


\begin{proof}
Let $\{X_i\}$ be the family of vector fields in $M$ which span the tangent spaces of the leaves of $\FF$. Let $Y_i$ be the projection of $X_i$ to the tangent bundle of $N$. It is clear that for any leaf $L\in \FF$, the vector fields $Y_i$ span the tangent space of $L\cap N$. We also point out that for $p\in N$ we have $T_pL_p = T_p(L_p\cap N)\oplus \nu_p(L_p,L_p\cap N)$. Thus the vectors $Y_i$ 
 are also tangent to the leaves of $\FF$. 

We prove now that $\FF'$ gives a transnormal system (i.e.\ for any geodesic $\gamma\colon I \to N$ with $\gamma'(0)\perp L_{\gamma(0)}\cap N$, we have $\gamma'(t)\perp L_{\gamma(t)}\cap N$ for all $t\in I$). Note that because $N$ is totally geodesic, such a $\gamma$ is also a geodesic of $M$.

We first show that if $\gamma$ emanates orthogonally to $L_p\cap N\subset \FF'$ from a point $p\in N$ belonging to a regular leaf of $\FF$, then its intersections with connected components of the partition $\FF'$ are orthogonal. 

Let $p=\gamma(0)$, assume that $|\gamma'(0)| = 1$ and  $q=\gamma(1) = \expo_p(\gamma'(0))$ is in a regular leaf $L_q$ of $\FF$. Since $\gamma'(0)\in \nu_p(N,L_p\cap N)$, then $q$ is a closest point to $p$ in $L_q\cap N$. Thus by Lemma~\ref{L: Closest point to regular leaf is in the normal sphere} the point $q$ is a closest point in $L_q$ to $p$. Since $L_q$ is regular, and  then again by Lemma~\ref{L: Closest point to regular leaf is in the normal sphere} $p$ is the closest point to $q$ in $L_p\cap  N$. Thus $\gamma'(1)$ is perpendicular to $L_q\cap N$. Since by Lemma~\ref{L: leaves in a geodesic pointing in a regular direction are regular}, for all $t\in I$ the leaves $L_{\gamma(t)}$ are regular, the same conclusion holds. Thus $\gamma(t)$ intersects  the leaves $L_{\gamma(t)}\cap N$  of $\FF'$ perpendicularly.

Now let $q\in N$ be any sufficiently close point to $p$. Since $M_{\reg}$ is open and dense, then for the normal sphere $\pre{N}\Sp_p^\perp$ of $L_p\cap N$ at $p$ in $N$, the set  $M_{\reg}\cap \exp_p(\pre{N}\Sp^\perp_p)$ is open and dense in $\exp_p(\pre{N}\Sp^\perp_p)$. Thus there exists a convergent sequence $\{\exp_p(v_i)\}\subset M_{\reg}\cap \exp_p(\pre{N}\Sp^\perp_p)$ with $q$ as limit. By the previous paragraph, we have that $v_i\in \Sp_p^\perp$. Since $\exp_p$ is a local diffeomorphism, we conclude that the sequence $\{v_i\}$ converges to $v$. Since $\Sp_p^\perp$ is closed, it follows that $v\in \Sp_p^\perp$. Thus for the Riemannian metric $g$, we have that $g(\gamma'(t),w) = 0$ for all vectors $w\in T_{\gamma(t)}(L_{\gamma(t)})$. Since the vectors $Y_k$ are tangent to the leaves of $\FF$, we conclude that $g(\gamma'(t),Y_k)= 0$ for all indexes $k$.


In the case that $p$ belongs to singular leaf of $\FF$, a geodesic of $N$ emanating orthogonally from $L_p\cap N$ is not necessarily orthogonal to $L_p$, so we cannot immediately appeal to the transnormality of $\FF$ as above. Instead we `reverse' our frame of reference. Let $q$ be a nearby point along $\gamma$. Consider a sequence $\{p_i\}\in M_{reg}\cap N$ which converges to the singular point $p$. From each of these points, there is some geodesic $\gamma_i$ emanating orthogonally to $L_{p_i}\cap N$ minimizing the distance between $L_{p_i}\cap N$ and $L_q\cap N$ (hence meeting both orthogonally). These $\gamma_i$ converge to $\gamma$, from which it follows that $\gamma$ meets $L_q\cap N$ orthogonally, by continuity of the metric. This completes the proof of transnormality.
\end{proof}

\begin{theorem}
Consider $(M,\FF)$ be a closed singular Riemannian foliation on a complete manifold. Let $N\subset M$ be a pre-section. Then the inclusion $i\colon N \hookrightarrow M$ induces a discrete submetry $i\colon N/\FF'  \to M/\FF$, given by $i(L_p\cap N) = L_p$ 
\end{theorem}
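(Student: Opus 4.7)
The plan is to verify: (a) $i$ is well-defined and surjective, (b) $i$ is 1-Lipschitz, (c) the submetry surjectivity-of-balls property holds, and (d) each fiber $i^{-1}(L_p)$ is discrete in $N/\FF'$. Well-definedness is immediate because every $\FF'$-leaf is a connected component of $L \cap N$ for a unique $\FF$-leaf $L$, and surjectivity follows directly from condition \eqref{D: generalized section B)} of Definition~\ref{D: generalized section}.

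For (b), given two $\FF'$-leaves $L'_x, L'_y$ sitting inside the $\FF$-leaves $L_x, L_y$, the inequality $d_N \geq d_M$ on points of $N$ yields
\[
	d_{N/\FF'}(L'_x, L'_y) = d_N(L'_x, L'_y) \geq d_M(L_x, L_y) = d_{M/\FF}(L_x, L_y).
\]
For (c), I must show that given an $\FF'$-leaf $L'_x \subseteq L_x \cap N$ and an $\FF$-leaf $L_z$ with $d_M(L_x, L_z) = \rho$, there exists a connected component $L'_y$ of $L_z \cap N$ with $d_N(L'_x, L'_y) \leq \rho$. Fix $p \in L'_x$ and split the argument on whether $L_x$ is regular or singular.

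When $L_x$ is regular, equifocality for closed singular Riemannian foliations (the distance function to $L_z$ is constant on $L_x$) gives $d_M(p, L_z) = \rho$. Choose $q \in L_z$ realizing this distance; the minimizing $M$-geodesic $\gamma$ from $p$ to $q$ has initial direction in $\Sp^\perp_p$, which by condition \eqref{D: generalized section C)} lies in $T_pN$, and since $N$ is totally geodesic we conclude $\gamma \subset N$. Hence $q$ lies in some component $L'_y$ of $L_z \cap N$, and $d_N(L'_x, L'_y) \leq \mathrm{length}(\gamma) = \rho$. When $L_x$ is singular, condition \eqref{D: generalized section C)} is not directly available at $p$, so I would approximate $p$ by $p_i \in M_{\reg} \cap N$ (dense in $N$ because $M_{\reg}$ is open and dense); apply the regular case to each $L_{p_i}$ to produce $q_i \in L_z \cap N$ with $d_N(p_i, q_i) \leq d_M(p_i, L_z)$; then pass to a subsequence with $q_i \to q \in L_z \cap N$, possible by compactness of $L_z$ and closedness of $N$ in $M$. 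Since $N$ is a closed totally geodesic submanifold, the intrinsic metric $d_N$ agrees locally with $d_M$, so $d_N(p_i, p) \to 0$ and $d_N(q_i, q) \to 0$; combining this with continuity of the distance function $d_M(\,\cdot\,, L_z)$, the triangle inequality in $N$ yields $d_N(p, q) \leq \rho$, and the component $L'_y$ of $L_z \cap N$ containing $q$ gives what we want.

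Finally for discreteness (d), each $L \cap N$ is the intersection of a compact leaf with the closed submanifold $N$, hence compact; its connected components are the $\FF'$-leaves in $i^{-1}(L)$, each compact, and any two are separated by positive distance in $N$, so $i^{-1}(L)$ is a discrete subset of $N/\FF'$. The main technical obstacle I expect is the singular case in (c): condition \eqref{D: generalized section C)} only constrains normal spaces at regular points, and so one must bridge the gap at singular $p \in L'_x$ through the density argument above, being careful that the approximation happens in the intrinsic metric $d_N$ rather than only in the ambient metric $d_M$.
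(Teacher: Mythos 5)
Your proof is correct in outline and shares the Lipschitz direction and the regular-point argument (condition~\eqref{D: generalized section C)} plus totally geodesic forces the horizontal $M$-geodesic into $N$) with the paper, but you handle the singular case by a genuinely different device than the authors do. They run the horizontal geodesic $\gamma$ from $L_p$ to $L_q$, pick a \emph{regular} point $p_n$ near the midpoint $\gamma(1/2)$, apply the already-established local isometry twice (from $L_{p_n}$ to each of $L_p$ and $L_q$), and let $n\to\infty$ in a purely numerical inequality; no sequence of points in $N$ needs to converge. You instead approximate the singular $p$ by regular $p_i\in N$, produce $q_i\in L_z\cap N$ for each, and extract a limit $q$. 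That is perfectly workable, but it quietly requires $N$ to be a closed subset of $M$ (so that $q=\lim q_i\in N$). The paper's hypotheses say ``complete, embedded, totally geodesic'' rather than ``closed,'' and while closedness almost certainly holds, it is not stated; the midpoint trick sidesteps this point entirely. If you want to keep your argument, you should add a sentence justifying that $N$ is closed in $M$.

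One further caution: your discreteness step (d) is not quite finished. You observe that the components of $L\cap N$ are compact and pairwise disjoint, hence pairwise at positive distance, and conclude discreteness of $i^{-1}(L)\subset N/\FF'$. But pairwise-positive distance does not prevent accumulation; for a compact set like $\pi_N(L\cap N)$, discreteness is equivalent to \emph{finitely many} components of $L\cap N$, which you have not established (transversality gives it when $L$ is regular, but not obviously when $L$ is singular). Interestingly, the paper's own proof does not address discreteness at all, so this is a gap you at least noticed and partially attacked; it deserves its own short argument, e.g.\ via the tubular-neighborhood/slice structure of the leaves of $\FF'$, to show that only finitely many components of $L\cap N$ can occur.
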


\begin{proof}
Recall that the distance between $L_p\cap N $ and $L_q\cap N$ in $N/\FF'$ is given by $d_N (L_p\cap N ,L_q\cap N = \inf \{ d_M(x',y')\mid x'\in L_p\cap N, \  y'\in L_q \cap N\}$. On the other hand the distance between $L_p$ and $L_q$ in $M/\FF$, is equal to $\inf \{d_M(x,y)\mid x\in L_p, \  y\in L_q \}$. From this we see that in general for $p,q\in N$, we have $d_M(L_p,L_q) \leqslant d_{N}(L_p\cap N, L_q\cap N)$. Thus,  a ball of radius $r$ centered at $L_p\cap N$ in $N/\FF'$ gets mapped into the ball of of radius $r$ centered at $L_p$ in $M/\FF$.

To prove that $i\colon N/\FF'  \to M/\FF$ is a submetry, we have to prove that for any $r>0$ and any leaf $L_p$ of $\FF$, if $L_q$ is such that $d_M(L_q,L_p)<r$, then $d_{N}(L_q\cap N, L_p\cap N)< r$. That is the map from the ball of radius $r$ around $L_p\cap N$ in $N/\FF'$ to the ball of radius $r$ around $L_p$ in $M/\FF$ is onto. For this it is sufficient to prove this for any sufficiently small radius.

We will now prove that for $p\in M_{\reg}\cap N$, the map $i\colon N/\FF'  \to M/\FF$ is a local isometry for a sufficiently small ball around $L_p$. Fix $r>0$ with $r$ smaller than the injectivity radius of $M$ at $p$. Take $L_q$ such that $d_M(L_p,L_q)<r$, and let $\gamma$ be the minimizing geodesic between $L_p$ and $L_q$ starting at $p$; i.e. $\ell(\gamma) = d_M(L_p,L_q)$ and $\gamma(0) = p$. Then $\gamma$ is perpendicular to $L_p$ at $p\in M$. Since $N$ is a pre-section we have $\nu_p(M,L_p)\subset T_p N$, and since $N$ is totally geodesic, we conclude that $\gamma$ is a geodesic in $N$. Let $q'\in L_q\cap N$ be a closest point to $p$ in $L_q\cap N$. By Lemma~\ref{L: Closest point to regular leaf is in the normal sphere} we have that $q'$ is the closest point to $p$ in $L_q$. Thus we have
\[
	 d_N(L_q\cap N, L_p\cap N) = d_N(q',p) = d_M(q',p) = d_M(L_q,L_p)< r.
\]

Now consider $p\in N$ arbitrary, and take $r$ smaller that the injectivity radius of $M$ at $p$. Fix $L_q$ of $\FF$ such that $d_M(L_p,L_q)<r$. Let $\gamma\colon [0,1]\to M$ be the minimizing geodesic  of $M$ joining $L_p$ to $L_q$ starting at $p$. Since $M_{\reg}$ is dense, for any $n>2$ there exists $p_n$ such that for the middle point $\gamma(1/2)$ we have
\[
	d_M(p_n,\gamma(1/2))<\frac{r}{n}.
\]
By applying the triangle inequality and the fact that $d_M(\gamma(1/2), q) \leqslant r/2$ and $d_M(p,\gamma(1/2)) \leqslant r/2$ we conclude that:
\begin{align*}
	d_M(p_n,q) &\leqslant d_M(p_n,\gamma(1/2)) + d_M(\gamma(1/2),q) \leqslant r/n+r/2 < r;\\
	d_M(p,p_n) &\leqslant d_M(p,\gamma(1/2)) + d_M(\gamma(1/2), p_n) \leqslant r/n+r/2 < r.
\end{align*}
Thus we get,
\begin{align*}
		d_N(L_p\cap N, L_q\cap N) &\leqslant d_N(L_p\cap N, L_{p_n}\cap N) + d_N(L_{p_n}\cap N, L_{q}\cap N)\\ 
		&= d_M(L_p, L_{p_n}) + d_M(L_{p_n}, L_{q}) \leqslant \frac{2r}{n}+r.
\end{align*}
Consequently, by taking the limit when $n$ goes to infinity, we conclude that
\[
	d_N(L_p\cap N, L_q\cap N) \leqslant r.
\]

%
\end{proof}

\begin{theorem}[Slice Theorem] \label{S: Slice Theorem}
Let $(M,\FF)$ be a closed singular Riemannian foliation on a complete manifold containing a pre-section  $N\subset M$. For any $q\in N$, the space $V_q = \nu_q(M,L_q)\cap T_q N$ is a pre-section for the infinitesimal foliation $(\nu_q(M,L_q), \FF_q)$. 
\end{theorem}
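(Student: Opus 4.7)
The plan is to verify the three pre-section axioms for $V_q \subset (\nu_q(M,L_q), \FF_q)$ in turn. Axiom~(A) is immediate: $V_q = T_q N \cap \nu_q(M, L_q)$ is a linear subspace of the Euclidean vector space $\nu_q(M, L_q)$, so it is automatically complete and totally geodesic. The key preliminary observation used throughout is that, because $N$ is totally geodesic, for $r > 0$ smaller than the injectivity radius at $q$ we have the identification $N \cap S_q = \exp_q(V_q \cap B_r(0))$, where $S_q = \exp_q(\nu_q(M, L_q) \cap B_r(0))$ is the usual normal slice at $q$. This translates leaf-meeting statements about $V_q \subset \nu_q(M,L_q)$ into intersection statements for $N$ inside $S_q$.

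For axiom~(B), fix a leaf $\Le$ of $\FF_q$ and first reduce to the case where $\Le$ is regular, using density of regular points in $\FF_q$ together with closedness of leaves: if every regular leaf of $\FF_q$ meets the closed set $V_q$ and $\Le_n \to \Le$ is a sequence of regular leaves approximating a singular one, a compactness/limit argument produces a point of $V_q \cap \Le$. For a regular $\Le$, pick small $v \in \Le$; by Lemma~\ref{L: leaves in a geodesic pointing in a regular direction are regular} the point $p = \exp_q(v)$ lies in a regular leaf $L_p$ of $\FF$. The submetry $i\colon N/\FF' \to M/\FF$ from the preceding theorem gives a point $x \in L_p \cap N$ within $N$-distance $|v|$ of the $\FF'$-leaf through $q$. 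The plan is then to take a closest point $p' \in L_p \cap N$ to $q$ and invoke Lemma~\ref{L: Closest point to regular leaf is in the normal sphere} with roles reversed: with $L_p$ regular and $q$ a closest point of $L_q \cap N$ to $p'$, the lemma forces $p' \in \exp_q(\Sp^\perp_q) = S_q$. Via the identification above, $p'$ corresponds to a vector $v' \in V_q$ with $\exp_q(v') \in L_p$; a connectedness argument inside $L_p \cap S_q$ through the regular stratum then identifies $v'$ as an element of the leaf $\Le$.

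For axiom~(C), take $v \in V_q$ such that $\Le_v$ is a regular leaf of $\FF_q$. Since $v \in T_q N$ and $N$ is totally geodesic, the geodesic $\gamma(t) = \exp_q(tv)$ lies in $N$, and by Lemma~\ref{L: leaves in a geodesic pointing in a regular direction are regular}, $p_t = \gamma(t) \in N \cap M_\reg$ for $t \in (0, 1]$. Axiom~(C) of the pre-section $N$ at $p_t$ gives $\nu_{p_t}(M, L_{p_t}) \subset T_{p_t} N$. Parallel transport back to $q$ along the horizontal geodesic $\gamma$ preserves $TN$ (since $N$ is totally geodesic) and preserves the horizontal bundle $\nu(M, L_\bullet)$ (by transnormality), so the transported subspace lies in $\nu_q(M, L_q) \cap T_q N = V_q$. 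Letting $t \to 0^+$, the parallel-transported normal spaces converge in the Grassmannian of $\codim(L_{\reg})$-planes of $T_q M$ to the normal space $\nu_v(\nu_q(M, L_q), \Le_v)$ of the regular leaf of the infinitesimal foliation, by the standard identification of normal spaces to leaves of an infinitesimal foliation as limits of normal spaces to nearby regular leaves. The desired inclusion $\nu_v(\nu_q(M, L_q), \Le_v) \subset V_q$ follows.

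The main obstacle is the step in (B) where Lemma~\ref{L: Closest point to regular leaf is in the normal sphere} requires that $q$ itself be a closest point of $L_q \cap N$ to $p'$. The submetry only controls $N$-distances to the entire $\FF'$-leaf through $q$, and when this leaf is positive-dimensional, the nearest point of $L_q \cap N$ to $p'$ can be some other $q'' \in L_q \cap N$, which would only place $p'$ on the slice $S_{q''}$ rather than $S_q$. Handling this requires either an additional transnormality argument at $q''$ to transfer to $q$, or an approximation through a sequence of regular points $q_n \to q$ in $N$, where $V_{q_n} = \nu_{q_n}(M, L_{q_n})$ is the full normal space and the analogous statement holds trivially.
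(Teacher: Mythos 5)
Your part~(A) matches the paper exactly, but parts~(B) and~(C) each diverge, and part~(C) contains a genuine error.

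For condition~(C), the paper does not use parallel transport; it shows the equivalent inclusion $\nu_v(\nu_q(M,L_q),V_q)\subset T_v\mathcal{L}_v$ (equivalent by orthogonal complementation inside $\nu_q(M,L_q)$), picks $w\in\nu_v(\nu_q(M,L_q),V_q)$ corresponding to $w'\in V_q^\perp\subset\nu_q(M,N)$, and studies the Jacobi field $J$ along $\alpha(s)=\exp_q(sv)$ with $J(0)=0$, $J'(0)=w'$. Magata's Lemma~4.1 gives $J(s)\in\nu_{\alpha(s)}(M,N)$ because $N$ is totally geodesic, so $D_v(\exp_q)(w)=J(1)\in\nu_p(M,N)\subset T_pL_p$ by axiom~(C) at the regular point $p$; then the isomorphism $D_v(\exp_q)\colon T_v\mathcal{L}_v\to T_pL_p$ forces $w\in T_v\mathcal{L}_v$. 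Your replacement of $D\exp_q$ by parallel transport fails at the step where you assert that parallel transport along the horizontal geodesic $\gamma$ ``preserves the horizontal bundle $\nu(M,L_\bullet)$ (by transnormality).'' Transnormality only controls the velocity $\gamma'(t)$; it does not say that parallel transport carries normal vectors to normal vectors. That stronger statement is the assertion that the normal (horizontal) distribution is parallel along $\gamma$, whose failure is precisely what the O'Neill $A$-tensor measures, and it is false for, e.g., the Hopf fibration. As a consequence, the parallel-transported spaces need not lie in $\nu_q(M,L_q)$ at all, and the claimed ``standard'' Grassmannian limit to $\nu_v(\nu_q(M,L_q),\mathcal{L}_v)$ is not justified either --- $D_v(\exp_q)$, not parallel transport, is the map that intertwines the infinitesimal leaf with $L_p$, and the two differ by the solution of the Jacobi equation. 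The Jacobi-field route is not an optional refinement but the correct linearization.

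For condition~(B), you correctly sense that the statement is not as automatic as the paper's one-line justification suggests, and you identify the obstacle in your own proposed argument: Lemma~\ref{L: Closest point to regular leaf is in the normal sphere} needs $q$ itself, not merely some point of $L_q\cap N$, to be a nearest point to $p'$, and when $L_q\cap N$ has positive dimension this need not hold. Your suggested repair (approximate $q$ by regular $q_n\in N$, where $V_{q_n}=\nu_{q_n}(M,L_{q_n})$, and pass to a limit using closedness of $V_q$ and completeness of $N$) is reasonable but would need to be carried out; note also that the lemma as stated is phrased with $p$ in a regular leaf, so reversing the roles of $p$ and $q$ is not a literal application of it. The paper sidesteps all of this by asserting the intersection property directly from completeness of $N$ and the fact that $N$ meets every leaf of $\FF$, without routing through the submetry or the nearest-point lemma.
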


\begin{proof}
Observe that $V_q$ is a linear subspace of $\nu_q (M, L_q)$, so it is totally geodesic and complete. Hence, it satisfies condition \eqref{D: generalized section A)} of Definition~\ref{D: generalized section}.

Since $N$ intersects each leaf  of $\FF$ and $N$ is complete, then $T_q N$ intersects each leaf of the infinitesimal foliation  $\FF_q$ at $q$, i.e.\ $V_q$ satisfies condition \eqref{D: generalized section B)} of  Definition \ref{D: generalized section}. 

It remains to prove that $V_q$ satisfies condition \eqref{D: generalized section C)} of  Definition \ref{D: generalized section}. Fix $v\in V_q$ such that $\mathcal{L}_v$ is a regular leaf of the infinitesimal  foliation $\FF_q$, and observe that, as in \cite{magata2009reductions}, the property \eqref{D: generalized section C)} is equivalent to $\nu_v (\nu_q (M, L_q),V_q) \subset T_v \mathcal{L}_v$. Since the infinitesimal foliation is invariant under homotheties, we may assume that $v$ is small enough, so that $p = \expo_q(v)$ is contained in a tubular neighborhood given by the slice theorem in \cite{MendesRadeschi2019}. 

Recall that $(\nu_q(M,L_q),\FF_q)$ is a singular Riemannian foliation  with respect to  the Euclidean metric (see \cite[Section~1.2]{Radeschi2012}), thus we fix this metric on $\nu_q(M,L_q)$. We observe that since $\nu_q(M,L_q)$ is a linear space, then we can identify $T_v \nu_q(M,L_q)$ with $\nu_q(M,L_q)$. Since $V_q$ is a linear subspace of $\nu_q(M,L_q)$, under this identification we identify $T_v V_q $ with $V_q$. Moreover, for the Euclidean metric we have the following splittings: $ T_v \nu_q(M,L_q) = T_v V_q\oplus \nu_v (\nu_q(M,L_q),V_q)$ and $\nu_q(M,L_q)  = V_q \oplus V_q^\perp$. Thus, we can identify $\nu_v (\nu_q(M,L_q),V_q)$ with $V_q^\perp$.

Now we consider the decomposition $T_q M = T_q N \oplus \nu_q(M,N)$ with respect to the foliated Riemannian metric of $M$ at $q$. Then
\begin{linenomath*}
\begin{align*}
	V_q^\perp &= V_q^\perp \cap T_q M = V_q^\perp \cap (T_q N \oplus \nu_q(M,N) )\\ &= (V_q^\perp \cap T_q N)\oplus (V_q^\perp \cap \nu_q(M,N)).
\end{align*}	
\end{linenomath*}
Consider $x\in V_q^\perp\cap T_q N$. Since $V_q^\perp\subset \nu_q(M,L_q)$, we conclude that $x\in T_q(N)\cap \nu_q(M,L_q)=V_q$. So $x\in V_q\cap V_q^\perp=\{0\}$ and therefore $x=0$. This implies that $V_q^\perp = V_q^\perp\cap \nu_q(M,N)\subset \nu_q(M,N)$. 

We consider $w\in \nu_v(\nu_q(M,L_q), V_q)$ arbitrary. Let $w'\in \nu_q(M,L_q)$ be the vector corresponding to $w$ under the identification of $T_v(\nu_q(M,L_q))$ with $\nu_q(M,L_q)$. Then by the previous paragraphs we have, $w'\in V_q^\perp\subset \nu_q(M,N)$.
Observe that since $v\in V_q\subset T_q N$ and $N$ is totally geodesic, then the geodesic $\alpha\colon I\to M$ given by $\alpha (s) = \exp_q(sv)$ is really a geodesic in $N$. Let $J(s)$ be the Jacobi field along $\alpha(s)$ determined by $J(0)=0$, and $J'(0) = w'$. By Lemma~4.1 in \cite{magata2009reductions} we have that for all $s\in I$, $J(s)\in \nu_{\alpha(s)}(M,N)$.  
We have by \cite[Chapter~IX, Thm~3.1]{Lang}:
\[
	D_{v}(\exp_q)(w) = J(1)\in\nu_p(M,N).
\]
Since $v\in V_q$ is such that $\mathcal{L}_v$ is a regular leaf of $\FF_q$, we have by Lemma~\ref{L: leaves in a geodesic pointing in a regular direction are regular} that $L_p$ is a regular leaf of $\FF$. By property \eqref{D: generalized section C)}, we have that $\nu_p(M,N)\subset T_p L_p$. Thus $D_{v}(\exp_q)(w)\in T_p L_p$. Moreover the exponential map of $M$ at $q$ induces a local diffeomorphism $\exp_q\colon \mathcal{L}_v\to L_p$. By considering the derivative at $v$, we have an isomorphism $D_v(\exp_q)\colon T_v\mathcal{L}_v\to T_pL_p$. Thus we conclude that $w\in T_v \mathcal{L}_v$ as desired.

%
%

\end{proof}

\begin{rmk}\label{R: Dimension of presection for infinitesimal foliation}
Since $V_q$ is a subspace of $\nu_q (M,L_q)$, we have $\dim (V_q) \leqslant \dim (\nu_q (M,L_q))$. This obvious statement provides a comparison of the relative codimensions of leaves in a pre-section with that of the associated `ambient' leaves. Namely, for a given leaf $L_q\cap N$ of the pre-section foliation $\FF'$, we have that $\codim(N, L_q\cap N)\leq \codim(M,L_q)$.
\end{rmk}

The technique employed to prove the main theorem will involve ``chasing the codimension drop'' and inductively reducing via pre-sections. For our argument, we will need to focus on the infinitesimal foliation normal to such a leaf's \textit{stratum} (the component of leaves of the same dimension containing that leaf). In the infinitesimal foliation, nearby leaves of the same dimension appear as point leaves and we have the splitting \[(V,\FF)=(V_0\times V_0^{\perp},\{pts\}\times \FF_{>0})\] where $V_0$ is the linear subspace of $V$ foliated by points (i.e. the tangent space to the stratum of the central leaf), $V_0^{\perp}$ is the orthogonal complement of $V_0$  with respect to the Euclidean metric (i.e. the normal space to the stratum) and $\FF_{>0}$ is a foliation whose only point leaf is the origin. If $(W,\FF')$ is a pre-section of $(V,\FF)$, then since $W$ intersects all leaves of $\FF$, we must have that $V_0\subset W$. With this, we have the splitting \[(W,\FF')=\left(V_0\times (V_0^{\perp}\cap W), \{pts\}\times (\FF_{>0})'\right)\]
where $(\FF_{>0})'$ is the partition of $V_0^{\perp}\cap W$ by its intersection with the leaves $\FF$.

\begin{lemma} \label{normal to stratum}
With the notation above, if $(W,\FF')$ is a pre-section of an infinitesimal foliation $(V,\FF)$, then $\left((V_0^{\perp}\cap W),(\FF_{>0})'\right)$ is a pre-section of $(V_0^{\perp},\FF_{>0})$.
\end{lemma}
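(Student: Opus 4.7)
The plan is to verify directly the three defining conditions (A), (B), (C) of Definition~\ref{D: generalized section} for the pair $(V_0^{\perp}\cap W, (\FF_{>0})')$ inside the infinitesimal foliation $(V_0^{\perp},\FF_{>0})$. The product splittings $V=V_0\times V_0^{\perp}$ and $W=V_0\times(V_0^{\perp}\cap W)$ stated right before the lemma will do essentially all the work, together with the fact that in Euclidean space every totally geodesic complete submanifold passing through a point is a (affine) linear subspace.

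For condition (A), since $W$ is a complete totally geodesic submanifold of the Euclidean space $V$ containing the origin (because $W$ intersects the leaf $\{0\}$), it is a linear subspace; then $V_0^{\perp}\cap W$ is a linear subspace of $V_0^{\perp}$, hence complete and totally geodesic. For condition (B), given any leaf $L$ of $\FF_{>0}$, the set $\{0\}\times L$ is a leaf of $\FF$ in $V$. Since $W$ meets every leaf of $\FF$, it meets $\{0\}\times L$ at some point $(0,w)$; by construction $w\in V_0^{\perp}\cap W$ and $w\in L$, so $V_0^{\perp}\cap W$ meets every leaf of $\FF_{>0}$.

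Condition (C) is the main point and requires one careful observation: since the $V_0$-factor is foliated by points, a leaf $\{v_0\}\times L$ of $\FF$ is regular precisely when $L$ is a regular leaf of $\FF_{>0}$. Thus, if $w\in V_0^{\perp}\cap W$ has regular $\FF_{>0}$-leaf $\mathcal{L}_w$, then the point $(0,w)\in W$ lies in a regular leaf of $\FF$, so condition (C) applied to the pre-section $W$ at $(0,w)$ gives
\[
\nu_{(0,w)}(V,\mathcal{L}_{(0,w)})\subset T_{(0,w)}W.
\]
Using the orthogonal product structure, the left-hand side equals $V_0\oplus \nu_w(V_0^{\perp},\mathcal{L}_w)$ and the right-hand side equals $V_0\oplus T_w(V_0^{\perp}\cap W)$, because $\mathcal{L}_{(0,w)}=\{0\}\times \mathcal{L}_w$ and $W=V_0\times(V_0^{\perp}\cap W)$. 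Cancelling the common $V_0$ summand yields $\nu_w(V_0^{\perp},\mathcal{L}_w)\subset T_w(V_0^{\perp}\cap W)$, which is condition (C) for $V_0^{\perp}\cap W$.

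The only real subtlety is the regularity correspondence in the previous paragraph — one could worry that $\{0\}\times \mathcal{L}_w$ is somehow not regular in $\FF$ because the $V_0$-factor makes leaves "look smaller," but since the $V_0$-direction contributes only point leaves, the maximal leaf dimension of $\FF$ equals that of $\FF_{>0}$. Once this is clear, conditions (A)–(C) follow without any further work.
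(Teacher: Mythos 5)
Your proof is correct and follows essentially the same route as the paper: verify conditions (A), (B), (C) directly using the product splittings $V = V_0\times V_0^{\perp}$ and $W = V_0\times(V_0^{\perp}\cap W)$, identifying normal and tangent spaces factorwise and cancelling the $V_0$ summand in condition (C). The one small variation is that you fix $a=0$ when applying condition (B), whereas the paper runs the argument for an arbitrary point leaf $\{a\}$ in the $V_0$-factor; this is immaterial since $W \supset V_0$. Your added remark that $\{v_0\}\times L$ is regular in $\FF$ iff $L$ is regular in $\FF_{>0}$ is implicitly used in the paper and is worth making explicit.
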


\begin{proof}
Since $V_0^{\perp}\cap W$ is a linear subspace of $V_0^{\perp}$, it is totally geodesic and we have condition \eqref{D: generalized section A)}. Moreover, a leaf of $\FF$ is of the form $\{a\}\times L$, where $a\in V_0$ is a point leaf and $L\in \FF_{>0}$ and since $(W,\FF')$ is a pre-section of $(V,\FF)$, we have 
\begin{linenomath*}
\begin{align*}
W \cap ({\{a\}\times L}) &\neq \emptyset\\
\implies \left(V_0\times (V_0^{\perp}\cap W)\right) \cap (\{a\}\times L) &\neq\emptyset\\
\implies \left(V_0\cap \{a\}\right)\times \left((V_0^{\perp}\cap W)\cap L\right) &\neq \emptyset\\
\implies \{a\}\times (W_0^{\perp}\cap L)&\neq\emptyset\\
\end{align*}
\end{linenomath*}
Hence $V_0^{\perp}\cap W$ intersects every leaf of $\FF_{>0}$ and we have condition \eqref{D: generalized section B)}. Now let $L\in \FF_{>0}$ be a regular leaf through $p\in L\cap (V_0^{\perp}\cap W)$. Again, since $(W,\FF')$ is a pre-section of $(V,\FF)$, we have $\nu_{(a,p)}(V,\{a\}\times L)\subset T_{(a,p)}(W)$. Given the splittings above, we have 
\begin{linenomath*}
\begin{align*}
\nu_{(a,p)}\left(V_0\times V_0^{\perp},\{a\}\times L\right)&\subset T_{(a,p)}\left(V_0\times (V_0^{\perp}\cap W)\right)\\
\implies \nu_a(V_0,a)\times \nu_p(V_0^{\perp},L)&\subset T_a(V_0)\times T_p(V_0^{\perp}\cap W)
\end{align*}
\end{linenomath*}
and since $\nu_a(V_0,a)=T_a(V_0)$, it follows that $\nu_p(V_0^{\perp},L)\subset T_p(V_0^{\perp}\cap W)$, so condition \eqref{D: generalized section C)} is satisfied. 
\end{proof}

\begin{lemma} \label{restrict to spheres}
Let $(V,\FF)$ be an infinitesimal foliation and $(\Sp_V,\FF|_{\Sp_{V}})$ denote the singular Riemannian foliation given by its restriction to the round unit sphere in $V$. If $(W,\FF')$ is a pre-section of $(V,\FF)$, then $(\Sp_W,\FF'|_{\Sp_{W}})$ is a pre-section of $(\Sp_V,\FF|_{\Sp_{V}})$. 
\end{lemma}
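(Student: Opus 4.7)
The plan is to verify the three axioms (A), (B), (C) of Definition~\ref{D: generalized section} for $(\Sp_W,\FF'|_{\Sp_W})$ inside $(\Sp_V,\FF|_{\Sp_V})$, after first reducing the geometry of $W$ and the geometry of the leaves to their cleanest form. First I would observe that any pre-section $W$ of an infinitesimal foliation $(V,\FF)$ must be a linear subspace of $V$: since $W$ is complete and totally geodesic in the Euclidean space $V$ it is an affine subspace, and since the origin $\{0\}$ is a leaf of $\FF$ and $W$ meets every leaf, $W$ contains the origin. Therefore $\Sp_W = W\cap \Sp_V$ is a great subsphere of $\Sp_V$, giving condition (A) for free (a great subsphere is complete and totally geodesic).

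Next I would record the standard fact that, because $\FF$ is transnormal and $\{0\}\in\FF$, every nonzero leaf $L$ of $\FF$ is contained in a sphere centered at the origin: the radial geodesic through any $v\in L$ is a unit-speed geodesic leaving $\{0\}$ perpendicularly, so it stays perpendicular to every leaf it meets, and thus $|\cdot|$ is constant along $L$. Consequently the leaves of $\FF|_{\Sp_V}$ are precisely the leaves of $\FF$ which lie in $\Sp_V$, and the regular leaves of $\FF|_{\Sp_V}$ are exactly the regular leaves of $\FF$ that meet $\Sp_V$ (leaves of $\FF$ on different concentric spheres are related by the homothety invariance, so regularity transfers). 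This identification is what makes conditions (B) and (C) fall out of the corresponding conditions for $W$.

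For (B), given a leaf $L$ of $\FF|_{\Sp_V}$, view $L$ as a leaf of $\FF$ contained in $\Sp_V$; since $W$ is a pre-section of $(V,\FF)$ we have $W\cap L\neq\emptyset$, and $W\cap L\subset W\cap \Sp_V = \Sp_W$. For (C), fix $p\in \Sp_W$ with $L_p$ a regular leaf of $\FF|_{\Sp_V}$, hence a regular leaf of $\FF$. I would use the orthogonal decomposition
\[
\nu_p(V,L_p) \;=\; \R\cdot p \,\oplus\, \nu_p(\Sp_V,L_p),
\]
which is valid because $L_p\subset \Sp_V$ forces $T_pL_p\subset p^\perp$. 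Since $W$ is a pre-section of $(V,\FF)$, condition \eqref{D: generalized section C)} gives $\nu_p(V,L_p)\subset T_pW = W$; in particular $\nu_p(\Sp_V,L_p)\subset W\cap p^\perp = T_p\Sp_W$, which is exactly condition \eqref{D: generalized section C)} for $(\Sp_W,\FF'|_{\Sp_W})\subset (\Sp_V,\FF|_{\Sp_V})$.

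The main obstacle I anticipate is notational rather than conceptual: one needs to be careful that the regularity of $p$ is the same in $\FF$ and in $\FF|_{\Sp_V}$, and that the leaf-through-$p$ really is the same object from either point of view. Both facts are immediate once one uses the radial-homothety description of infinitesimal foliations and the fact that leaves lie in concentric spheres, so I expect the proof to go through essentially by pasting together the three bullets above.
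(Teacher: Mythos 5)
Your proof is correct and follows essentially the same route as the paper's: reduce $W$ to a linear subspace (so $\Sp_W$ is a great, hence totally geodesic, subsphere), use that leaves of an infinitesimal foliation lie in concentric spheres to get (B), and check (C) via the identity $\nu_p(\Sp_V,L_p)=\nu_p(V,L_p)\cap T_p\Sp_V$, which is equivalent to your splitting off the radial line $\R p$. Your observation that $0\in W$ because $\{0\}$ is a leaf that $W$ must meet is a small but welcome extra care that the paper glosses over when asserting $W$ is linear rather than merely affine.
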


\begin{proof}
Since $W$ is totally geodesic, and $V$ is a Euclidean space, we conclude that $W$ is a linear subspace of $V$. Thus the unit sphere $\Sp_W$ is a totally geodesic submanifold of $\mathbb{S}_V$, so we need only show that properties \eqref{D: generalized section B)} and \eqref{D: generalized section C)} are satisfied by $(\mathbb{S}_W,\FF'|)$. Since $V$ is an infinitesimal foliation, the leaves of $\FF$ are 
contained in distance spheres about the origin, and since $(W,\FF')$ is a pre-section of $(V,\FF)$, it follows that each distance sphere about the origin in $W$ intersects every leaf of the distance sphere of the same radius about the origin in $V$. Thus, property \eqref{D: generalized section B)} is satisfied.\\
For property \eqref{D: generalized section C)}, first note that because leaves of $(V,\FF)$ are contained in distance spheres about the origin, a regular leaf of $(\Sp_V,\FF|_{\Sp_{V}})$ is exactly a regular leaf of $(V,\FF)$ (i.e. $L_p\cap \Sp_V=L_p$). So let $L_p$ be such a leaf with $p\in W$. We wish to show that $\nu_p(\Sp_V,L_p\cap \Sp_V)\subset T_p(\Sp_W)$. Now 
\begin{linenomath*}
\begin{align*}
\nu_p(\Sp_V,L_p\cap \Sp_V) &= \nu_p(\Sp_V,L_p)\\
			&= \nu_p(V,L_p)\cap T_p(\Sp_V)\\
			&\subset T_p(W)\cap T_p(\Sp_V)\\
		 	&= T_p(W\cap \Sp_V)\\
		 	&= T_p(\Sp_W)
\end{align*}
\end{linenomath*}
where the third line uses that $\nu_p(V,L_p)\subset T_p(W)$ since $(W,\FF')$ is a pre-section of $(V,\FF)$. The fourth line follows from the fact that $W$ and $\Sp_{V}$ intersect transversally in $V$. 
\end{proof}

\subsection{Pre-sections in Positive Curvature}
\ \\
For foliations on positively curved manifolds, the existence of a nontrivial pre-section guarantees an infinitesimal reduction in the following sense

\begin{lemma}\label{L: codimension drops somewhere}
Let $(M,\FF)$ be a closed singular Riemannian foliation on a complete manifold with  positive sectional curvature. Let $N\subset M$ be a pre-section of $\FF$, and for $p\in N$ set $V_p = T_p N \cap \nu_p(M,L_p)$. Then there exists $q\in N$ such that $\dim (V_q ) < \dim (\nu_q (M,L_q))$. That is, $\codim(N,L_q\cap N)<\codim(M,L_q)$.
\end{lemma}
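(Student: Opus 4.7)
The plan is to argue by contradiction using Wilking's dual foliation theorem (Theorem 2.4), which is the only place the positive curvature hypothesis will enter. Suppose to the contrary that $\dim V_p = \dim \nu_p(M,L_p)$ for every $p\in N$. Since $V_p=T_pN\cap \nu_p(M,L_p)$ is by definition a subspace of $\nu_p(M,L_p)$, equality of dimensions forces
\[
\nu_p(M,L_p) \subseteq T_pN \quad \text{for all } p\in N,
\]
strengthening property \eqref{D: generalized section C)} of Definition~\ref{D: generalized section} so that it now holds at \emph{every} point of $N$, including the singular ones.

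Next I would show that every piecewise smooth horizontal curve emanating from a point of $N$ is entirely contained in $N$. For a smooth horizontal geodesic $\gamma$ with $\gamma(0)=p\in N$, the initial velocity $\gamma'(0)\in\nu_p(M,L_p)$ lies in $T_pN$ by the above containment. Since $N$ is totally geodesic (property \eqref{D: generalized section A)}), the entire geodesic $\gamma$ stays in $N$. For a broken horizontal curve, apply this argument inductively to each smooth segment: each break point lies in $N$, and the next direction—being normal to the leaf through that point—again lies in $T_{\gamma(t)}N$, so the next segment remains in $N$. Thus the dual leaf $L_p^{\#}$ of any $p\in N$ is contained in $N$.

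By Theorem~\ref{T: Wilking one leaf}, the positive curvature of $M$ forces $L_p^{\#} = M$, hence $N = M$. This contradicts the nontriviality of the pre-section (which must be implicit here, since if $N=M$ the conclusion of the lemma is trivially false). Therefore some $q\in N$ must satisfy $\dim V_q < \dim \nu_q(M,L_q)$, which by Remark~\ref{R: Dimension of presection for infinitesimal foliation} is exactly the desired statement that $\codim(N,L_q\cap N) < \codim(M,L_q)$.

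The only subtle point is the behavior at singular leaves of $\FF$: the defining property \eqref{D: generalized section C)} of a pre-section only guarantees $\nu_p(M,L_p)\subseteq T_pN$ at regular points, so one might worry that broken horizontal curves could escape $N$ at a singular point. But the contradiction hypothesis upgrades this containment to hold at every point of $N$, removing this obstruction. I do not expect any other significant difficulty, since the conclusion then follows immediately from Wilking's theorem.
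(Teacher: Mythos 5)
Your proposal is correct and follows essentially the same argument as the paper: assume equality of dimensions everywhere, deduce $\nu_p(M,L_p)\subset T_pN$, use total geodesicity to trap all horizontal curves (hence the dual leaf) inside $N$, and invoke Wilking's theorem to force $N=M$, contradicting nontriviality. Your extra care in handling piecewise broken horizontal curves, and your observation that nontriviality of $N$ must be implicitly assumed in the statement, are both accurate and slightly more explicit than the paper's terse treatment, but the approach is the same.
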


\begin{proof}
Assume that $\dim (V_q ) = \dim (\nu_q (M,L_q))$ for all $q\in N$. Since $V_q=\nu_q(M,L_q)\cap T_qN$, it follows that $\nu_q(M, L_q)\subset T_qN$. Since $N$ is totally geodesic, this means that all horizontal geodesics from $q$ belong to $N$. Thus, the dual leaf  $L^{\#}_q$ (see \cite{Wilking2007}) is contained in $N$. Since $M$ is positively curved, it follows from Theorem~\ref{T: Wilking one leaf} in \cite{Wilking2007}, that the dual leaf of $\FF$ through $p$ is equal to $M$. This implies that $N=M$, which is a contradiction. Thus $\dim (V_q ) < \dim (\nu_q (M,L_q))$ for some $q\in N$. 
\end{proof}

This is particularly important when restricting the infinitesimal foliation to its associated round unit sphere foliation. 

\section{Proof of Theorem~\ref{Theorem: Core reduction}}

With Theorem \ref{S: Slice Theorem} and Lemma \ref{L: codimension drops somewhere} 
have the  necessary ingredients to prove the main theorem:

\begin{proof}[Proof of Theorem~\ref{Theorem: Core reduction}]
Let $(N,\FF')$ be a nontrivial pre-section of $(M,\FF)$. We are assuming $M$ is positively curved, so by Lemma \ref{L: codimension drops somewhere}, there exists a point $q\in N$ (necessarily belonging to a singular leaf of $\FF$) such that $\codim(N, L_q\cap N)< \codim(M,L_q)$. From Theorem \ref{S: Slice Theorem}, we have that $V_q=\nu_q(M,L_q)\cap T_qN$ is a pre-section for the infinitesimal foliation $(\nu_q(M,L_p),\FF_q)$. By using Lemma \ref{normal to stratum}, we will focus on the foliation normal to the stratum of $L_q$ and its pre-section. By Lemma \ref{restrict to spheres}, we restrict this (normal to the stratum of $L_q$) infinitesimal foliation and its pre-section to their respective unit spheres and refer to them as $(M_1, \FF_1)$ and $(N_1,\FF'_1)$. Observe that $N_1$ is a proper submanifold of $M_1$. In particular, we have $\dim(M_1)<\dim(M)$ and $M_1$ is positively curved (it is a round sphere) and $\FF_1$ contains no point leaves.

If $(N_1,\FF'_1)$ is a nontrivial pre-section (equivalently, $(M_1, \FF_1)$) is not a single leaf foliation), then we can repeat this process at a point $q_2\in N_1$ where the relative codimension drops as in Lemma \ref{L: codimension drops somewhere} to form $(M_2,\FF_2)$ with pre-section $(N_2,\FF'_2)$. Now, we have \[\dim(M_2)<\dim(M_1)<\dim(M)\] and $M_2$ is positively curved.

For as long as $M_i$ is positively curved (it is a sphere, so this means simply that $\dim(M_i)>1$) and $(M_i,\FF_i)$ is not a single leaf foliation, this process of ``chasing the drop in codimension'' will continue. If we encounter $M_i=\mathbb{S}^1$, then since $\FF_i$ cannot contain point leaves, it must be that this is a single leaf foliation. Thus, this process necessarily ends with a single leaf foliation $(M_i,\FF_i)$. In this case, let $L_{q_i}\in \FF_{i-1}$ be the chosen leaf of $M_{i-1}$ whose relative codimension dropped. The fact that $(M_i,\FF_i)$ is a single leaf foliation means precisely that the space of directions normal to the stratum of $L_{q_i}\subset M_{i-1}$ is a single point. This implies that the this stratum forms a boundary face (see pg. 5 of \cite{moreno2019}) of the Alexandrov leaf space $M_{i-1}/\FF_{i-1}$. By the inductive definition of boundary for Alexandrov spaces, this implies that $\bdy(M_{i-2}/\FF_{i-2})\neq \emptyset$, which implies that $\bdy(M_{i-3}/\FF_{i-3})\neq \emptyset$, and ulitimately, that $\bdy(M/F)\neq \emptyset$
\end{proof}

\bibliographystyle{siam}
\bibliography{References}

\end{document}